\renewcommand{\Re}{\operatorname{Re}}
\renewcommand{\Im}{\operatorname{Im}}
\newcommand{\sfrac}[2]{{#1 / #2}}  
\newtheorem*{theorem*}{Theorem}
\newtheorem*{corollary*}{Corollary}
\newtheorem*{theorema}{Theorem A}
\newtheorem*{theoremb}{Theorem B}
\newtheorem*{theoremc}{Theorem C}
\newtheorem*{theoremd}{Theorem D}
\newtheorem{theorem}{Theorem}
\newtheorem{lemma}[theorem]{Lemma}
\newtheorem{proposition}[theorem]{Proposition}
\newtheorem{corollary}[theorem]{Corollary}
\theoremstyle{definition}
\newtheorem{remark}[theorem]{Remark}
\numberwithin{theorem}{section}
\numberwithin{equation}{section}
\newcommand{\fn}[1]{\operatorname{\mathit{#1}}}
\newcommand\RR{{\mathbb{R}}}
\newcommand\CC{{\mathbb{C}}}
\newcommand\cS{{\mathcal{S}}}
\newcommand\cH{{\mathcal{H}}}
\newcommand\lieg{\mathfrak{g}}
\newcommand{\wrt}{\,d}
\newcommand{\lnorm}{\left \|}
\newcommand{\rnorm}{\right \|}
\newcommand{\biglnorm}{\bigl \|}
\newcommand{\bigrnorm}{\bigr \|}
\newcommand{\lopnorm}{\left.{|\!|\!|}}
\newcommand{\ropnorm}{{|\!|\!|}\right.}
\newcommand{\labs}{\left |}
\newcommand{\rabs}{\right |}
\newcommand{\biglabs}{\bigl |}
\newcommand{\bigrabs}{\bigr |}
\newcommand{\absdot}{\left|{\,{\cdot}\,}\right|}
\newcommand{\lip}{\left <}
\newcommand{\rip}{\right >}
\newcommand{\lpar}{\left( }
\newcommand{\rpar}{\right) }
\newcommand{\biglpar}{\bigl( }
\newcommand{\bigrpar}{\bigr) }
\newcommand{\Biglpar}{\Bigl( }
\newcommand{\Bigrpar}{\Bigr) }
\newcommand{\bigglpar}{\biggl( }
\newcommand{\biggrpar}{\biggr) }
\newcommand{\rist}{\Bigr|_}
\newcommand{\bigrist}{\bigr|_}
\newcommand{\Bigrist}{\Bigr|_}
\begin{document}

\title[Inequalities on stratified groups]
{Hardy and uncertainty inequalities
\\ on stratified Lie groups}
\author[P. Ciatti]{Paolo Ciatti}
\address{Dipartimento di Metodi e Modelli Matematici per le Scienze Applicate,\newline\indent
Universit\`a di Padova\\ Via Trieste 63\\ 35121 Padova\\ Italy}
\email{ciatti@dmsa.unip.it}
\author[M. G. Cowling]{Michael G.\ Cowling}
\address{School of Mathematics and Statistics\\University of New South Wales,\newline\indent
UNSW Sydney 2052\\ Australia}
\email{m.cowling@unsw.edu.au}
\author[F. Ricci]{Fulvio Ricci}
\address{Scuola Normale Superiore\\
Piazza dei Cavalieri 7\\ 56126 Pisa\\ Italy}
\email{fricci@sns.it}
\thanks{The first author acknowledges Australian Research Council support (grant number DP110102488).}
\keywords{Stratified group, uncertainty principle, Hardy's inequality, Heisenberg's inequality}
\subjclass{primary 42B37; secondary 43A80}

\begin{abstract}
We prove various Hardy-type and uncertainty inequalities on a stratified Lie group $G$.
In particular, we show that the operators $T_\alpha: f \mapsto \absdot^{-\alpha} L^{-\alpha/2} f$, where $\absdot$ is a homogeneous norm, $0 < \alpha < Q/p$, and $L$ is the sub-Laplacian, are bounded on the Lebesgue space $L^p(G)$.
As consequences, we estimate the norms of these operators sufficiently precisely to be able to differentiate and prove a logarithmic uncertainty inequality.
We also deduce a general version of the Heisenberg--Pauli--Weyl inequality, relating the $L^p$ norm of a function $f$ to the $L^q$ norm of $\absdot^\beta f$ and the $L^r$ norm of $L^\sfrac\delta2 f$.
\end{abstract}

\maketitle

\section{Introduction}
In 1920, in a paper on Fourier series, Hardy stated the following integral inequality; he published a proof five years later.
Given a nonnegative (measurable) function $f$ on $\RR^+$, let $F(x) = \int_0^x f(t) \,dt$.
Then
\[
\int_0^\infty \labs \frac{ F(x) }{x} \rabs^p \,dx \leq \lpar \frac{p}{p-1}\rpar^{p} \int_0^\infty \labs f(x) \rabs^p \,dx \ ,
\]
when $p > 1$ and the right hand side is finite.
See \cite[Theorem 327]{HLP} for the history of this inequality.
Of course, this is much the same as the inequality
\[
\int_0^\infty \labs \frac{ f(x) }{x} \rabs^p \,dx \leq \lpar \frac{p}{p-1}\rpar^{p} \int_0^\infty \labs \frac{df(x)}{dx} \rabs^p \,dx \ .
\]

We might also replace $f(x)$ by $x f(x)$ and interpret this inequality as a statement about the boundedness of the operator $T$, given by
\[
Tf(x) = \frac{d(xf(x))}{dx} \ ,
\]
or of the dual operator $T^*$:
\[
T^*f(x) = x \frac{df(x)}{dx} \ .
\]

In this paper, we will focus on generalizations of $T^*$ rather than of $T$.

Hardy's inequality, which is related to inequalities of Rellich and of Sobolev, has been extended in many ways; for instance, Davies and Hinz \cite{DavHin} showed that
\[
\int_{\RR^n}  \frac{ \labs f(x) \rabs^p }{ \labs x \rabs^p} \,dx \leq \lpar \frac{p}{n-p} \rpar^p \int_{\RR^n} \labs \nabla f (x) \rabs^p \,dx
\]
for all $f \in C^\infty_c(\RR^n)$, when $n > p$.

In 1927, Heisenberg presented a heuristic argument for his famous uncertain principle; the mathematical details were provided by Pauli and Weyl.
In mathematical language, this inequality states that, if $f$ is a suitable function on $\RR$, then
\begin{equation}\label{ineq:HPW-in-R}
\lpar \int_{\RR} \labs f(x) \rabs^2 \,dx \rpar ^{2}
\leq  2 \lpar \int_{\RR} \labs x f(x) \rabs^2 \,dx \rpar \lpar \int_{\RR} \labs \frac{d f(x)}{dx} \rabs^2 \,dx \rpar .
\end{equation}

Many inequalities combining features of both Hardy's inequality and Heisenberg's uncertainty principle are known; for instance,
\begin{equation}\label{ineq:Hardy-in-Rn}
\lpar \int_{\RR^n} \labs f(x) \rabs^2 \,dx \rpar^2
 \leq  C(n,\alpha) \lpar \int_{\RR^n} \biglabs \labs x \rabs^{\alpha} f(x) \bigrabs^2 \,dx \rpar \lpar \int_{\RR^n} \labs L^{\sfrac{\alpha}{2}} f(x) \rabs^2 \,dx \rpar ,
\end{equation}
where $L$ is minus the Laplacian.
An equivalent form of \eqref{ineq:Hardy-in-Rn} is
\begin{equation*}
\lpar \int_{\RR^n} \labs f(x) \rabs^2 \,dx \rpar^2
 \leq  C(n,\alpha) \lpar \int_{\RR^n} \biglabs \labs x \rabs^{\alpha} f(x) \bigrabs^2 \,dx \rpar
 		\lpar \int_{\RR^n} \biglabs \labs \xi \rabs^{\alpha} \hat f(\xi) \bigrabs^2 \,d\xi \rpar
\end{equation*}
(using the factor $\exp(- i x \cdot \xi)$ in the definition of the Fourier transform $\hat f$ of $f$).
If $C(n,\alpha)$ is the best constant in this inequality (see  \cite{Beck1}), then $C(n,\alpha) \to 1$ as $\alpha \to 0+$, and $C(n, \alpha)$ is right differentiable (as a function of $\alpha$) at $0$, with right derivative $D(n)$, say.
As a consequence, by differentiating, we obtain the logarithmic uncertainty inequality
\begin{equation}\label{ineq:logarithmic-in-Rn-Fourier}
- \frac{D(n)}{2} \int_{\RR^n} \labs f(x) \rabs^2 \,dx
\leq  \int_{\RR^n} \log \labs x\rabs  \labs f(x) \rabs^2 \,dx  + \int_{\RR^n} \log \labs\xi\rabs | \hat f(\xi) |^2 \,d\xi \ .
\end{equation}

Alternatively, we could write
\begin{equation}\label{ineq:logarithmic-in-Rn}
- \frac{D(n)}{2} \int_{\RR^n} \labs f(x) \rabs^2 \,dx
\leq  \int_{\RR^n} \log \labs x\rabs  \labs f(x) \rabs^2 \,dx  + \int_{\RR^n} \Re \lpar \lpar \log L^{\sfrac{1}{2}} f(x) \rpar \bar f(x) \rpar \,dx \ .
\end{equation}
It is known that the Heisenberg--Pauli--Weyl inequality \eqref{ineq:Hardy-in-Rn} may be recovered from \eqref{ineq:logarithmic-in-Rn-Fourier}.
For more on these inequalities, including how to deduce \eqref{ineq:Hardy-in-Rn} from \eqref{ineq:logarithmic-in-Rn-Fourier}, generalizations to $L^p$ spaces, and the best constants therein, see Beckner \cite{Beck1, Beck2}.

These inequalities have been generalized in many ways.
Uncertainty inequalities have been extended to environments such as Lie groups and manifolds;  see Folland and Sitaram \cite{FSit} for more information about older work.
More recent work on the general topic of uncertainty principles on groups and manifolds includes \cite{CRS, Tao, VSC}.
Earlier work on the problems that we treat here includes \cite{BCX, GaroLanco}.
Less is known about logarithmic inequalities in more general contexts, and one of the main aims of our paper is to generalize the inequality \eqref{ineq:logarithmic-in-Rn} to $L^p$ spaces on stratified Lie groups.
This entails generalizing \eqref{ineq:Hardy-in-Rn} to $L^p$ spaces on stratified groups and controlling the constants as $\alpha \to 0+$.

For completeness, we mention
that there are two types of logarithmic inequality that are related to
Heisenberg--Pauli--Weyl inequalities. Besides \eqref{ineq:logarithmic-in-Rn-Fourier}, there are  also inequalities arising from the Hausdorff-Young inequality by differentiating in the Lebesgue index, which introduces a factor of $\log |f|$.  These were
apparently first noticed by Hirschman~\cite{Hir}; other important work on the
topic includes \cite{Beck3, Oz-Prz}, and, in the
context of nilpotent groups,~\cite{BFM}.
In this article, we deal with inequalities where the weight functions are differentiated,
introducing factors such as $\log |x|$.

In this paper, we take a stratified Lie group $G$ of homogeneous dimension $Q$, with a positive hypoelliptic sub-Laplacian $L$ (more on these terms later).

In $\RR^n$,  the negative fractional powers of the Laplacian  are given by convolution with a negative power of the euclidean norm; in general stratified groups, the situation is complicated by the fact that each power $L^{-\alpha/2}$ of the sub-Laplacian involves a different homogeneous norm $\absdot_\alpha$.
For the detailed analysis of the behaviour of $L^{-\alpha/2}$ as $\alpha$ tends to 0, which we develop  in Section \ref{sec:homo}, it is convenient to use a special homogeneous norm, written $\absdot_0$, which is a limit of the norms $\absdot_\alpha$.

 In Section 3, we study the Hardy operator $T_\alpha$ associated to a homogeneous norm $\absdot$ by
\[
T_\alpha f =   \absdot^{-\alpha}  L^{-\alpha/2} f
\]
for all $f \in C^\infty_c(G)$. Using the Schur criterion, we derive Theorem \ref{Talpha} and Corollary \ref{finalmente}, which combine to give the following result.

\begin{theorema}
Suppose that $\absdot$ is a homogeneous norm on $G$, that $1 < p < \infty$ and that $0 < \alpha < \sfrac{Q}{p}$.
Then  the operator $T_{\alpha}$ extends uniquely to a bounded operator  on $L^p(G)$.
For the particular homogeneous norm $\absdot_0$, the operator norm $\lopnorm T_\alpha \ropnorm_{p,p}$ satisfies
\[
\lopnorm T_\alpha \ropnorm_{p,p} \leq 1 + C \alpha + O(\alpha^2)\ .
\]
\end{theorema}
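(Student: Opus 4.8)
The plan is to realize $T_\alpha$ as an integral operator and then invoke the weighted Schur test. Recall from Section~\ref{sec:homo} that $L^{-\alpha/2}$ is convolution with a kernel $K_\alpha$ that is smooth away from the identity, positive (by subordination to the heat semigroup), and homogeneous of degree $\alpha - Q$; in fact $K_\alpha = \absdot_\alpha^{\,\alpha-Q}$ for the homogeneous norm $\absdot_\alpha$, and $\absdot_\alpha \to \absdot_0$ as $\alpha \to 0+$. Thus, for $f \in C_c^\infty(G)$,
\[
T_\alpha f(x) = \labs x\rabs^{-\alpha}\int_G K_\alpha(y^{-1}x)\,f(y)\wrt y ,
\]
so $T_\alpha$ has the nonnegative kernel $\mathcal K(x,y) = \labs x\rabs^{-\alpha} K_\alpha(y^{-1}x)$.

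Next I would apply the Schur test with the power weight $\phi(x) = \labs x\rabs^{-b}$. The two Schur integrals, $\int_G \mathcal K(x,y)\,\phi(y)^{p'}\wrt y$ and $\int_G \mathcal K(x,y)\,\phi(x)^{p}\wrt x$, are handled by the dilation structure of $G$: writing $x = \delta_r(\omega)$ with $r = \labs x\rabs$ and $\labs\omega\rabs = 1$ and substituting $y = \delta_r(z)$ (so that $\wrt y = r^Q\wrt z$, $y^{-1}x = \delta_r(z^{-1}\omega)$, and $K_\alpha(y^{-1}x) = r^{\alpha - Q}K_\alpha(z^{-1}\omega)$), every factor of $r$ cancels and the first integral collapses to $\phi(x)^{p'}$ times $\int_G K_\alpha(z^{-1}\omega)\,\labs z\rabs^{-bp'}\wrt z$; the dual integral reduces in the same way, with $y = \delta_r(\eta)$ and $\labs\eta\rabs = 1$, to $\phi(y)^{p}$ times $\int_G K_\alpha(\eta^{-1}z)\,\labs z\rabs^{-\alpha-bp}\wrt z$. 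Taking suprema over the unit sphere gives the Schur constants $A$ and $B$. Both reduced integrals are finite precisely under the constraints $\alpha/p' < b < Q/p'$ and $0 < b < (Q-\alpha)/p$, coming from the local singularity of $K_\alpha$ (integrable since $\alpha > 0$), the power of $\labs z\rabs$ at the identity, and the decay of $K_\alpha$ at infinity. These ranges have a common point exactly when $\alpha < \sfrac{Q}{p}$, which explains the hypothesis; for such a $b$ the Schur test yields that $T_\alpha$ extends to $L^p(G)$ with $\lopnorm T_\alpha\ropnorm_{p,p} \le A^{1/p'}B^{1/p}$.

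For the refined bound I would take $\absdot = \absdot_0$ and track $A$ and $B$ as $\alpha \to 0+$, optimizing the choice of $b$. At $\alpha = 0$ one has $L^0 = I$, $K_0 = \delta$, and $T_0 = I$, so the reduced integrals tend to $1$ and the Schur constant tends to $1$; the content is the first-order behaviour. Here the special norm $\absdot_0$ is decisive: because $K_\alpha = \absdot_\alpha^{\,\alpha-Q}$ with $\absdot_\alpha \to \absdot_0$, evaluating against $\absdot_0$ turns the reduced integrals into Beta/Gamma-type integrals whose leading term is $1$ and which are right-differentiable at $\alpha = 0$. Expanding these (using the asymptotics of $K_\alpha$ and of $\Gamma(\alpha/2)^{-1}$ from Section~\ref{sec:homo}) to first order gives $A, B = 1 + O(\alpha)$ and hence $\lopnorm T_\alpha\ropnorm_{p,p} \le 1 + C\alpha + O(\alpha^2)$.

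The main obstacle is this last, quantitative step. Crude size bounds on $K_\alpha$ suffice for boundedness, but to isolate the leading constant $1$ and its linear correction one must use the fine $\alpha \to 0$ asymptotics of $K_\alpha$ and of the norms $\absdot_\alpha$ developed in Section~\ref{sec:homo}, and carry out the Gamma-function expansion \emph{uniformly} over the unit sphere $\{\labs\omega\rabs = 1\}$ while verifying right-differentiability in $\alpha$. That combination of uniformity and differentiability, rather than the Schur test itself, is the technical heart of the argument.
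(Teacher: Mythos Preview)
Your approach is essentially the paper's: Schur's test with the power weight $\phi=\absdot^{-b}$ for boundedness (your $b$ is the paper's $Q-\gamma$, and your admissible range for $b$ matches \eqref{gamma}), followed by a first-order expansion of the Schur constants in $\alpha$ for the refined estimate. One caution on that last step: on a general stratified group the reduced sphere integrals do \emph{not} collapse to closed-form Beta or Gamma expressions as in $\RR^n$; the paper instead uses the distributional expansion $I_\alpha=\delta_0+\alpha\Lambda+O(\alpha^2)$ from \eqref{sviluppo di F_alpha} to read off the linear term uniformly over the unit sphere (Lemma~\ref{calC}), which is exactly the uniformity-plus-differentiability issue you correctly flag as the crux.
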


The next two main results follow from Theorem A.
First, we combine  Theorem A with H\"older's inequality and differentiate to obtain the following statement.
\begin{theoremb}
Suppose that $\absdot$ is a homogeneous norm on $G$ and $1 < p < \infty$.
There is a constant $C$ such that
\begin{equation*}
\int_{G}  (\log \labs x\rabs) \labs f (x) \rabs^{p} \wrt x
	+ \int_{G} \Re \biglpar (\log L^\sfrac12 f)(x)\,\overline{f (x) } \bigrpar  \labs f (x) \rabs^{p - 2} \wrt x
 \ge C \lnorm f \rnorm_p^p
\end{equation*}
for all  $f\in C^{\infty}_c(G)$.
\end{theoremb}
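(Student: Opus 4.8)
The plan is to derive Theorem B by differentiating the inequality of Theorem A at $\alpha = 0$. First I would recast Theorem A in a form suited to differentiation. Taking the homogeneous norm to be $\absdot_0$ and applying the bounded operator $T_\alpha$ to the function $L^{\alpha/2}f$, where $f \in C^\infty_c(G)$, the identity $T_\alpha(L^{\alpha/2}f) = \absdot_0^{-\alpha} f$ together with the norm bound $\lopnorm T_\alpha \ropnorm_{p,p} \le M(\alpha)$, where $M(\alpha) = 1 + C\alpha + O(\alpha^2)$, gives
\[
\int_G |x|_0^{-\alpha p}\,|f(x)|^p \wrt x \;\le\; M(\alpha)^p \int_G |L^{\alpha/2}f(x)|^p \wrt x .
\]
Here one must verify that $L^{\alpha/2}f \in L^p(G)$ and that $L^{-\alpha/2}$ inverts $L^{\alpha/2}$ on such functions; both follow from the spectral and kernel analysis of $L^{-\alpha/2}$ near $\alpha = 0$ developed in Section \ref{sec:homo}.

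Next I would set
\[
\Psi(\alpha) = M(\alpha)^p \int_G |L^{\alpha/2}f|^p \wrt x \;-\; \int_G |x|_0^{-\alpha p}\,|f|^p \wrt x ,
\]
so that $\Psi(\alpha) \ge 0$ for $0 < \alpha < \sfrac{Q}{p}$, while $\Psi(0) = 0$ since $M(0) = 1$, $L^0 f = f$, and $|x|_0^{\,0} = 1$. Hence the right-hand derivative satisfies $\Psi'(0^+) \ge 0$. Differentiating termwise, using $M(\alpha)^p = 1 + pC\alpha + O(\alpha^2)$, the pointwise formula $\frac{d}{d\alpha}|u_\alpha|^p = p\,|u_\alpha|^{p-2}\Re(\bar u_\alpha\,\dot u_\alpha)$, and the derivatives $\frac{d}{d\alpha}\big|_0 |x|_0^{-\alpha p} = -p\log|x|_0$ and $\frac{d}{d\alpha}\big|_0 L^{\alpha/2}f = (\log L^{1/2})f$, I obtain
\[
0 \le \Psi'(0^+) = p\Big( C\|f\|_p^p + \int_G \Re\bigl((\log L^{1/2}f)\,\bar f\bigr)\,|f|^{p-2}\wrt x + \int_G (\log|x|_0)\,|f|^p\wrt x\Big).
\]
Dividing by $p$ and rearranging yields Theorem B for the norm $\absdot_0$, with constant $-C$.

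It then remains to pass from $\absdot_0$ to an arbitrary homogeneous norm $\absdot$. Since any two homogeneous norms on $G$ are equivalent, the difference $\bigl|\log|x| - \log|x|_0\bigr|$ is bounded by a constant $K$, so $\int_G(\log|x|)\,|f|^p$ differs from $\int_G(\log|x|_0)\,|f|^p$ by at most $K\|f\|_p^p$. Absorbing this into the constant gives the inequality with constant $-C - K$ for the general norm $\absdot$.

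The main obstacle will be justifying the termwise differentiation, that is, the interchange of $\frac{d}{d\alpha}$ with the integrals at $\alpha = 0^+$. For $\int_G|L^{\alpha/2}f|^p$ this requires quantitative control of the map $\alpha \mapsto L^{\alpha/2}f$ and of $(\log L^{1/2})f$ near $\alpha = 0$ (again supplied by Section \ref{sec:homo}), together with an integrable dominating function for the difference quotients. One must also handle the non-smoothness of $t \mapsto |t|^p$ at the origin when $p < 2$: there the identity $|u|^{p-2}\bar u = |u|^{p-1}\operatorname{sgn}\bar u \to 0$ keeps the integrand well defined, and the finiteness of $\int_G\Re\bigl((\log L^{1/2}f)\,\bar f\bigr)|f|^{p-2}$ follows by H\"older's inequality from $(\log L^{1/2})f \in L^p(G)$ and $|f|^{p-1} \in L^{p'}(G)$.
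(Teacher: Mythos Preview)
Your approach is essentially the paper's: differentiate the Theorem~A bound at $\alpha=0$ for the norm $\absdot_0$, then transfer to a general homogeneous norm by equivalence. The paper differs only in that it differentiates $\lnorm T_\alpha f\rnorm_p^p$ directly rather than first substituting $f\mapsto L^{\alpha/2}f$, so only the negative powers $L^{-\alpha/2}$ (whose kernels $I_\alpha$ are analysed in Section~\ref{sec:homo}) enter the computation; it also handles the $O(\alpha^2)$ remainder by comparing with the explicitly smooth function $(1+(C_p+\epsilon)\alpha)^p$ and letting $\epsilon\to0$ at the end, which sidesteps any question of whether the upper bound $M(\alpha)$ is itself differentiable.
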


Second, in Sections \ref{sec:HPW_C} and \ref{sec:HPW_R}, we explore Heisenberg--Pauli--Weyl type inequalities.
In Section \ref{sec:HPW_C}, we take Theorem A as out starting point, while in Section \ref{sec:HPW_R}, we give an alternative method of attack.
Our main result in this direction  is as general as one might hope for, although we lose control of the constants.

\begin{theoremc}
Suppose that $\absdot$ is a homogeneous norm, that $\beta>0$, $\delta>0$, $p>1$, $s\geq 1$,  $r > 1$, and that
\begin{equation}\label{pqr}
\frac{\beta+\delta}p = \frac\delta s+\frac\beta r \ .
\end{equation}
Then
\begin{equation}\label{HPWeq2}
\lnorm f \rnorm_p
\le C  \biglnorm \absdot^\beta f\bigrnorm_s^\sfrac\delta{(\beta+\delta)}\biglnorm  L^\sfrac\delta2 f\bigrnorm_r^\sfrac\beta{(\beta+\delta)}
\end{equation}
for all $f \in C^\infty_c(G)$.
\end{theoremc}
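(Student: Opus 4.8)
The plan is to obtain the multiplicative inequality (\ref{HPWeq2}) from a single application of H\"older's inequality followed by the Hardy inequality furnished by Theorem A; this is the route I would develop in Section \ref{sec:HPW_C}. The point is that the scaling relation (\ref{pqr}) is exactly what allows the weight $\absdot^\beta$ to be eliminated by interpolating $\labs f\rabs^p$. Writing $\theta=\sfrac\delta{(\beta+\delta)}$ and $1-\theta=\sfrac\beta{(\beta+\delta)}$, I would first record the pointwise identity
\[
\labs f(x)\rabs^p = \biglpar \absdot^{-\delta}\labs f(x)\rabs\bigrpar^{p(1-\theta)}\biglpar \absdot^\beta\labs f(x)\rabs\bigrpar^{p\theta},
\]
which holds because the exponent of the homogeneous norm on the right is $p\lpar-\delta(1-\theta)+\beta\theta\rpar=0$.

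Integrating this identity over $G$ and applying H\"older's inequality with exponents $u=\sfrac r{(p(1-\theta))}$ and $u'=\sfrac s{(p\theta)}$, I would next obtain
\[
\lnorm f\rnorm_p \le \lnorm \absdot^{-\delta}f\rnorm_r^{1-\theta}\lnorm \absdot^\beta f\rnorm_s^{\theta}.
\]
Here $u$ and $u'$ are conjugate precisely because of (\ref{pqr}), since $\sfrac{p(1-\theta)}r+\sfrac{p\theta}s=\frac p{\beta+\delta}\biglpar\frac\beta r+\frac\delta s\bigrpar=1$; this is the one place where the hypothesis (\ref{pqr}) enters. Finally I would absorb the first factor using Theorem A: taking $\alpha=\delta$ and exponent $r$ and setting $g=L^{\sfrac\delta2}f$ turns the boundedness of $T_\delta$ on $L^r(G)$ into the Hardy inequality $\lnorm\absdot^{-\delta}f\rnorm_r\le C\lnorm L^{\sfrac\delta2}f\rnorm_r$, and substituting this gives (\ref{HPWeq2}).

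The main obstacle is the range of validity. The intermediate quantity $\lnorm\absdot^{-\delta}f\rnorm_r$ is finite for $f\in C^\infty_c(G)$ only when $\delta r<Q$, which is also exactly the condition under which Theorem A applies with exponent $r$. Thus the argument above proves Theorem C only in the subcritical range $0<\delta<\sfrac Qr$. Reaching the fully general statement, where the norm $\lnorm\absdot^{-\delta}f\rnorm_r$ may be infinite although both sides of (\ref{HPWeq2}) are finite, requires a different argument, and this is where I expect to lose explicit control of the constant.

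For that general case I would first reduce the product inequality (\ref{HPWeq2}) to the additive inequality $\lnorm f\rnorm_p\le C\biglpar\lnorm\absdot^\beta f\rnorm_s+\lnorm L^{\sfrac\delta2}f\rnorm_r\bigrpar$. This reduction is dictated by homogeneity: under the dilation $f\mapsto f\circ\delta_t$ the three functionals scale as $t^{-Q/p}$, $t^{-\beta-Q/s}$ and $t^{\delta-Q/r}$, and (\ref{pqr}) says precisely that $-\sfrac Qp=\theta\lpar-\beta-\sfrac Qs\rpar+(1-\theta)\lpar\delta-\sfrac Qr\rpar$, so applying the additive inequality to $f\circ\delta_t$ and minimizing over $t>0$ recovers (\ref{HPWeq2}) whenever the two scaling exponents differ. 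It then remains to prove the additive inequality, and here I would use the heat-semigroup splitting
\[
f = e^{-tL}f + \int_0^t L^{1-\sfrac\delta2}e^{-uL}\biglpar L^{\sfrac\delta2}f\bigrpar\wrt u,
\]
bounding the integral term in $L^p(G)$ by $\lnorm L^{\sfrac\delta2}f\rnorm_r$ through the $L^r$-to-$L^p$ smoothing estimates for $e^{-uL}$, bounding $e^{-tL}f$ by $\lnorm\absdot^\beta f\rnorm_s$ through the decay of the heat kernel, and optimizing over $t$. This route avoids the restriction $\delta r<Q$ but yields a non-explicit constant.
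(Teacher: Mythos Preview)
Your argument for the subcritical range $\delta < Q/r$ is correct and coincides with what the paper records as Corollary~\ref{HPW}: the pointwise factorisation, then H\"older, then the Hardy inequality of Corollary~\ref{hardy}.

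For the full range the paper takes a different route. Rather than passing to an additive inequality and a heat-semigroup decomposition, it proves a Landau--Kolmogorov inequality (Theorem~D) by Stein complex interpolation, using only the $L^p$-boundedness of the imaginary powers $L^{iy}$. One then applies Corollary~\ref{HPW} with a subcritical exponent $\gamma=\theta\delta<Q/r$, invokes Theorem~D to replace $\lnorm L^{\gamma/2}f\rnorm$ by a product of the form $\lnorm L^{\delta/2}f\rnorm^{\theta}\lnorm f\rnorm_p^{1-\theta}$, and absorbs the extraneous $\lnorm f\rnorm_p$ factor into the left-hand side. This bootstrap is insensitive to the ordering of the Lebesgue indices.

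Your heat-semigroup sketch, by contrast, is not complete as written. The $L^r$-to-$L^p$ smoothing you invoke for the Duhamel term requires $r\le p$, but \eqref{pqr} makes $1/p$ the convex combination $\theta/s+(1-\theta)/r$, so $p<r$ whenever $s<r$; in that regime there is no heat-kernel smoothing from $L^r$ to $L^p$ and the integral bound fails. Likewise, controlling $\lnorm e^{-tL}f\rnorm_p$ by $\lnorm\absdot^\beta f\rnorm_s$ through heat-kernel decay implicitly needs a weighted Young-type estimate that imposes its own restriction on $\beta$ and $s'$, and you have already flagged that the scaling reduction breaks down when the two homogeneities coincide. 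None of these obstacles is necessarily fatal, but each must be dealt with; the paper's Landau--Kolmogorov route sidesteps them all.
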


A simple dilation and homogeneity argument shows that the only possible indices for which we could hope to prove an inequality like \eqref{HPWeq2} are those which satisfy \eqref{pqr}.

The proof of Theorem C requires an extension of the classical Landau--Kolmogorov inequality \cite{KL}, which may be of independent interest, and so we state it explicitly here.

\begin{theoremd}
Suppose that $0 \leq \theta \leq 1$ and that $\alpha \geq 0$.
If $1 < p, q, r < \infty$, and
\[
\frac{1}{p} = \frac{\theta}{q} + \frac{1-\theta}{r} \ ,
\]
then
\[
\lnorm L^{\sfrac{\alpha}{2}} f  \rnorm_p \leq C \lnorm  L^{\sfrac{\alpha}{2\theta}} f \rnorm_q^\theta \lnorm f \rnorm_r^{1-\theta}
\]
for all $f \in C^\infty_c(G)$.
\end{theoremd}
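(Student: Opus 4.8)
The plan is to obtain the inequality by complex interpolation, reading the three quantities $\lnorm L^{\sfrac\alpha2} f\rnorm_p$, $\lnorm L^{\sfrac\alpha{2\theta}} f\rnorm_q$ and $\lnorm f\rnorm_r$ as the values at $z=\theta$, $z=1$ and $z=0$ of a single analytic family of functions, exploiting that the differentiation order and the Lebesgue exponent interpolate \emph{in lockstep} along the strip. Since $\theta=1$ gives $p=q$ and a trivial statement, and $\alpha=0$ is immediate, I assume $0<\theta<1$ and $\alpha>0$. The essential input is the boundedness of the imaginary powers of $L$: for each $a\in(1,\infty)$ there are constants $C_a,c_a$ with $\lopnorm L^{is}\ropnorm_{a,a}\le C_a\,e^{c_a\labs s\rabs}$ for all $s\in\RR$, which is part of the functional calculus of the sub-Laplacian on $G$ and follows from the multiplier theory for $L$.

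First I would set, for $z$ in the closed strip $\{z:0\le\Re z\le1\}$ and fixed $f\in C^\infty_c(G)$,
\[
u_z = L^{\alpha z/(2\theta)} f \ ,
\]
so that $u_0=f$, $u_1=L^{\sfrac\alpha{2\theta}}f$ and, because $\alpha\theta/(2\theta)=\alpha/2$, also $u_\theta=L^{\sfrac\alpha2}f$. For $f\in C^\infty_c(G)$ the spectral calculus shows that $z\mapsto u_z$ is a well-defined analytic family taking values in $L^r(G)+L^q(G)$, with at most admissible (exponential) growth in $\Im z$. On the two edges of the strip the imaginary-power bound yields
\[
\lnorm u_{iy}\rnorm_r = \lnorm L^{\alpha i y/(2\theta)} f\rnorm_r \le C_r\,e^{c_r\labs y\rabs}\lnorm f\rnorm_r
\]
and
\[
\lnorm u_{1+iy}\rnorm_q = \lnorm L^{\alpha i y/(2\theta)}\, L^{\sfrac\alpha{2\theta}}f\rnorm_q \le C_q\,e^{c_q\labs y\rabs}\lnorm L^{\sfrac\alpha{2\theta}}f\rnorm_q \ .
\]

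Next I would apply the single-function form of Stein's interpolation theorem, that is, the Phragm\'en--Lindel\"of three-lines lemma underlying the identification $[L^r,L^q]_\theta=L^{p_\theta}$ with $1/p_\theta=(1-\theta)/r+\theta/q$. Concretely, I would pair $u_z$ against the standard analytic family $h_z$ built from a simple function $h$ with $\lnorm h\rnorm_{p'}\le1$ and apply the three-lines lemma to $z\mapsto\int_G u_z\,h_z$; the exponential boundary growth $e^{c\labs y\rabs}$ is admissible and is absorbed into the constant. This gives
\[
\lnorm u_\theta\rnorm_{p_\theta} \le C\,\lnorm f\rnorm_r^{1-\theta}\,\lnorm L^{\sfrac\alpha{2\theta}}f\rnorm_q^\theta \ .
\]
The hypothesis $1/p=\theta/q+(1-\theta)/r$ forces $p_\theta=p$, while $u_\theta=L^{\sfrac\alpha2}f$, so this is precisely the asserted inequality. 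Since we do not track constants here, the growth factors cost nothing.

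The main obstacle I expect is functional-analytic bookkeeping rather than any genuinely new inequality: one must verify carefully that $u_z=L^{\alpha z/(2\theta)}f$ is analytic and of admissible growth as an $L^r+L^q$-valued function throughout the strip (in particular that the positive real powers $L^{\Re(\alpha z/(2\theta))}f$ land in the relevant Lebesgue spaces for $f\in C^\infty_c(G)$), and that the imaginary-power bound holds with the stated exponential control. Granting the functional calculus of $L$, these verifications are routine, and the heart of the argument is the simple observation that order of differentiation and Lebesgue exponent move together as $z$ crosses the strip.
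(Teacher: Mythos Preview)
Your proposal is correct and follows essentially the same complex-interpolation argument as the paper: pair $L^{z/2}f$ against an analytic family built from a simple function and apply the three-lines lemma, using the Mihlin--H\"ormander multiplier theorem to bound the imaginary powers of $L$. The only cosmetic differences are that the paper parametrizes the strip as $\{0\le\Re z\le\alpha/\theta\}$ rather than $\{0\le\Re z\le1\}$, and it tames the exponential growth of $\lopnorm L^{iy/2}\ropnorm_{s,s}$ by inserting an explicit $e^{z^2}$ damping factor into the scalar test function before invoking Phragm\'en--Lindel\"of, rather than appealing to the admissible-growth clause of Stein's theorem.
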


The proof of Theorem B is given in Section \ref{sec:log}.
Theorems C and D are proved in Section \ref{sec:HPW_C}.

Given the very general context in which we work, we do not make any attempt to assign explicit values to the constants appearing in the above inequalities.
We believe that this is a challenging problem in special cases, and almost impossible in general.

The symbol $C$ will be used throughout to denote an undefined constant which may vary from one line to the next. When necessary we specify, possibly with a subscript, which parameters the value of $C$ may depend on.

Any dependence of constants on the group $G$ and the sub-Laplacian $L$ is ignored.

\section{Riesz potentials} \label{sec:homo}

Consider a connected, simply connected, nilpotent Lie group $G$ of dimension $n$.
As the exponential map is bijective, we may identify $G$ with its Lie algebra $\lieg$.
With this identification, the Haar measure of $G$ is given by the Lebesgue measure in the vector space $\lieg$.
We may define the Schwartz space $\cS(G)$ on $G$ similarly.

Assume that $G$ is \emph{stratified}, that is, the Lie algebra $\lieg$ of $G$ has a vector space direct sum decomposition
\begin{equation*}
\lieg = \lieg_1 \oplus \lieg_2 \oplus \dots \oplus \lieg_m\ ,
\end{equation*}
where $\lieg_{j+1}=[\lieg_1,\lieg_{j}]$ for $j = 1, \dots, m$ (we set $\lieg_{m+1} =\{0\}$).
For $r \in \RR^+$, the \emph{dilation} $\delta_{r}$ is the automorphism of $\lieg$ given by scalar multiplication by $r^j$ on $\lieg_j$ for each $j$.
The integer
 \begin{equation*}
 Q = \dim \lieg_1 + 2 \dim \lieg_2 + \dots + m\dim \lieg_m
 \end{equation*}
 is the \emph{homogeneous dimension} of $G$.

Fix a basis $\{X_1, \dots, X_l\}$ of  $\lieg_1$.
The associated  \emph{sub-Laplacian} $L$ is defined by
\begin{equation*}
L = - X_1^2 - X_2^2 - \dots -X_l^2\ .
\end{equation*}
It is well known that $L$ is  hypoelliptic \cite{Hor67},  and positive and essentially self-adjoint on $L^2(G)$ \cite{HJL85}.

We now construct kernels corresponding to certain negative powers of the sub-Laplacian, slightly improving on the results in Folland \cite{folland}.
By a theorem of G. A. Hunt \cite{GHunt}, the semigroup $(e^{-tL})_{t > 0}$ generated by $L$ (or more precisely, by its unique self-adoint extension) consists of convolutions with probability measures.
Since $L$ is hypoelliptic, these measures are absolutely continuous with respect to the Haar measure and have densities in the Schwartz space \cite{FS}, which are strictly positive by the maximum principle \cite{Bony}.
Hence for all $t \in \RR^+$, there exists $p_t\in \cS(G)$ such that
\begin{equation*}
e^{-tL}f (x) =  f  * p_t (x) =  \int_G f (xy^{-1}) \fn p_t (y) \,dy
\end{equation*}
for all $x \in G$ and all $f \in L^2(G)$.
The function $p_t$ is called the heat kernel of $L$.
By the homogeneity of $L$ with respect to the dilations $\delta_r$,
\begin{equation*}
p_t (x) = t^{-{\sfrac Q2}} \fn P(\delta_{t^{-\sfrac12}} x)
\end{equation*}
for all $x \in G$, where $P=p_1$; this is a strictly positive function in $\cS(G)$ and
\begin{equation*}
\int_G P(x) \wrt x = 1 \ .
\end{equation*}
The associated heat equation has been extensively studied in the more general contexts of Lie groups (see, for example \cite{VSC}) and manifolds (see, for example \cite{Gilkey}), and much of what applies in the case of stratified Lie groups also applies there, but some aspects of our environment are special; for instance, Schwartz spaces and dilations are not defined in general.

We define the fractional integral kernel $F_\alpha$ on $G \setminus\{0\}$ when $\Re \alpha < Q$ by the formula
\begin{equation}\label{Ialpha}
F_{\alpha} (x)
= \int_0^{\infty} t^{\sfrac \alpha2} \, p_t (x)  \,\frac{dt}{t}
= \int_0^{\infty} t^{\sfrac{(\alpha - Q)} 2} \, P \lpar \delta_{t^{-1/2}} x\rpar  \,\frac{dt}{t} \ ;
\end{equation}
the integral converges absolutely and uniformly on compact subsets of $G\setminus\{0\}$ to a smooth function, homogeneous of degree $\alpha - Q$.
Moreover, $F_{\alpha}$ is positive when $\alpha$ is real, since $P$ is positive.
Hence the function $\absdot_{\alpha}$, given by
\begin{equation}\label{normal}
\labs  x \rabs_{\alpha} =
\begin{cases}
\lpar  F_{0} (x) \rpar ^{-\sfrac 1 {(Q-\alpha)}} &\text{ if  $x \in G \setminus\{0\} $}  \\
 0 &\text{ if $x=0$} \ ,
\end{cases}
\end{equation}
is  nonnegative and homogeneous of degree $1$, and vanishes only at the origin, so is a homogeneous norm in the sense of Folland and Stein \cite{FS}.
In general, these norms are all different.
Not all homogeneous norms on stratified groups are subadditive, so some authors use the term gauge, but for some cases where this holds, see \cite{Cygan}.

We shall use the norm $\absdot_{0}$.
Given any homogeneous norm $\absdot$ on $G$, there are positive constants  $A$ and $B$ such that, for all $x\in G$,
\begin{equation}\label{norm}
A \labs x\rabs_{0} \leq  \labs x\rabs \leq B \labs x\rabs_{0}
\end{equation}
for all $x\in G$.

Denote the half-plane $\{\alpha \in \CC : \Re \alpha < Q\}$ by $\cH_Q$.
As a function of $\alpha$, the integral $F_{\alpha} (x)$ in \eqref{Ialpha} is holomorphic in $\cH_Q$, and its derivatives $F_{\alpha}^{(k)} (x)$ are given by the absolutely convergent integrals
\begin{equation*}
F_{\alpha}^{(k)} (x) = \frac{1}{{2^k}} \int_0^{\infty} t^{\sfrac \alpha 2}  (\log t)^{k} \, p_t (x)  \,\frac{dt}{t}\ .
\end{equation*}

\begin{lemma}\label{stime puntuali per le derivate di I-alpha}
Let $\absdot$ be a homogeneous norm on $G$.
If $x \neq 0$ and $ \alpha \in \cH_Q$, then $F_{\alpha} (x)$ and its derivatives  $F_{\alpha}^{(k)} (x)$  in $\alpha$ satisfy the estimates
\begin{equation*}
\labs  F_{\alpha}^{(k)} (x) \rabs
\leq C_{\alpha,k} \labs x\rabs^{\Re \alpha -Q} \lpar 1 + \biglabs  \log \labs x \rabs \bigrabs^{k}
\rpar ,
\end{equation*}
where $k = 0, 1, \dots$, and the constants $C_{\alpha,k}$ are independent of $\Im\alpha$.
\end{lemma}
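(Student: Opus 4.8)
The plan is to work directly from the integral representation of the derivatives displayed just before the statement, namely $F_{\alpha}^{(k)}(x) = 2^{-k}\int_0^\infty t^{\sfrac\alpha2}(\log t)^k p_t(x)\,\sfrac{dt}{t}$, and to read off the asserted behaviour in $\labs x\rabs$ by a single change of variables that combines the homogeneity of $P$ with its Schwartz decay. The first observation, which I would record at the outset, is that taking absolute values inside the integral replaces $t^{\sfrac\alpha2}$ by $t^{\sfrac{\Re\alpha}2}$, since $\labs t^{\sfrac{i\Im\alpha}2}\rabs = 1$. This is precisely what forces every constant produced below to be independent of $\Im\alpha$, so that part of the conclusion is essentially free.

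Next, writing $x = \delta_\rho u$ with $\rho = \labs x\rabs$ and $\labs u\rabs = 1$, and inserting $p_t(x) = t^{-\sfrac Q2}P(\delta_{t^{-1/2}}x)$, I would substitute $s = \rho^2/t$. Since $\delta_{t^{-1/2}}x = \delta_{\rho/\sqrt t}u$ and $\rho/\sqrt t = \sqrt s$, this substitution yields the clean factorization
\[
F_{\alpha}^{(k)}(x) = \frac{\rho^{\alpha - Q}}{2^k}\int_0^\infty s^{\sfrac{(Q-\alpha)}2}\,(2\log\rho - \log s)^k\,P(\delta_{\sqrt s}u)\,\frac{ds}{s}\ ,
\]
in which all of the $\rho$-dependence is explicit except for the factor $(2\log\rho - \log s)^k$. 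Expanding this factor by the binomial theorem separates the powers of $\log\rho$ from the powers of $\log s$, so that $\labs F_{\alpha}^{(k)}(x)\rabs$ is bounded by $\rho^{\Re\alpha - Q}$ times a polynomial of degree $k$ in $\labs\log\rho\rabs$ whose coefficients are the integrals $\int_0^\infty s^{\sfrac{(Q-\Re\alpha)}2 - 1}\labs\log s\rabs^{k-j}\,\labs P(\delta_{\sqrt s}u)\rabs\,ds$.

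It then remains to bound these coefficient integrals by constants depending only on $\Re\alpha$ and $k$. Here I would use that $P \in \cS(G)$ decays rapidly in the homogeneous norm, so $\labs P(y)\rabs \le C_N(1+\labs y\rabs)^{-N}$ for every $N$; since $\labs\delta_{\sqrt s}u\rabs = \sqrt s\,\labs u\rabs = \sqrt s$ independently of $u$, the integrand is dominated by $C_N\,s^{\sfrac{(Q-\Re\alpha)}2 - 1}\labs\log s\rabs^{k-j}(1+\sqrt s)^{-N}$. Convergence at $s = 0$ is guaranteed by the hypothesis $\Re\alpha < Q$, which makes the exponent $\sfrac{(Q-\Re\alpha)}2 - 1$ strictly larger than $-1$ (a logarithmic factor not affecting integrability), while convergence at $s = \infty$ follows by taking $N$ large. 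Since a polynomial of degree $k$ in $\labs\log\rho\rabs$ with nonnegative coefficients is at most a constant multiple of $1 + \labs\log\rho\rabs^k$, this gives the stated estimate with $\rho = \labs x\rabs$. The lemma is phrased for an arbitrary homogeneous norm, but the argument uses only the scaling $\labs\delta_r y\rabs = r\labs y\rabs$ and so applies verbatim to any norm; alternatively one could establish it for $\absdot_0$ and transfer it by the equivalence \eqref{norm}, at the cost only of harmless additive and multiplicative constants. I expect the single genuine subtlety to be the passage from the Euclidean Schwartz decay of $P$ to rapid decay in the homogeneous norm, uniformly over the unit sphere $\{\labs u\rabs = 1\}$; everything else is routine bookkeeping of the logarithmic factors and the two endpoint convergence checks.
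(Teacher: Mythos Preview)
Your proof is correct. The paper reaches the same conclusion by a shorter and more abstract route: instead of returning to the integral representation, it differentiates the homogeneity identity $F_\alpha(\delta_r x)=r^{\alpha-Q}F_\alpha(x)$ with respect to $\alpha$, obtaining inductively
\[
F_\alpha^{(k)}(\delta_r x)=r^{\alpha-Q}\sum_{j\le k}c_{kj}(\log r)^{k-j}F_\alpha^{(j)}(x)\ ,
\]
and then sets $r=\labs x\rabs$ to reduce to the unit sphere, with constant $\sup_{\labs x'\rabs=1}\sum_{j\le k}\labs c_{kj}\rabs\,\labs F_\alpha^{(j)}(x')\rabs$. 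Your change of variables $s=\rho^2/t$ is the integral-level version of exactly this scaling, and after the binomial expansion you recover the same decomposition (in fact $c_{kj}=\binom{k}{j}$). What your approach buys is explicitness: you verify directly, via the Schwartz decay of $P$ and the observation $\labs t^{i\Im\alpha/2}\rabs=1$, that the sphere constants are finite, uniform in $u$, and depend only on $\Re\alpha$, whereas the paper leaves that step implicit. Conversely, the paper's argument is more conceptual: it uses only the functional equation and would apply verbatim to any holomorphic family that is homogeneous of degree $\alpha-Q$, without reference to an integral formula.
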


\begin{proof}
This follows immediately from the homogeneity of $F_{\alpha}^{(k)} (x)$ in $x$.
Indeed, differentiating the identity
\begin{equation*}
F_\alpha(\delta_rx)=r^{\alpha-Q}F_\alpha(x)
\end{equation*}
with respect to $\alpha$, we see that
\begin{equation*}
F_{\alpha}' ( \delta_{r} x )
= r^{\alpha - Q} F_{\alpha}' (x) + (r^{\alpha - Q} \log r) \, F_{\alpha} (x)\ ,
\end{equation*}
and, inductively,
\begin{equation*}
F_\alpha^{(k)}(\delta_rx)=r^{\alpha - Q}\sum_{j\le k}c_{kj}(\log r)^{k-j}F_\alpha^{(j)}(x)\ .
\end{equation*}
Write $x = \delta_{\labs x\rabs} x'$, where $\labs x' \rabs = 1$; then this identity yields
\begin{equation*}
F_\alpha^{(k)} ( x ) = \labs x\rabs^{\alpha - Q}\sum_{j\le k}c_{kj}(\log \labs x\rabs)^{k-j}F_\alpha^{(j)}(x')\ ,
\end{equation*}
which implies that
\begin{align*}
\labs  F_\alpha^{(k)} ( x ) \rabs
&\leq
\bigglpar \sup_{\labs x' \rabs = 1}\sum_{j\le k}\labs c_{kj} \rabs\labs  F_{\alpha}^{(j)} (x') \rabs\biggrpar \labs x\rabs^{\Re \alpha - Q} \lpar  1 + \biglabs  \log \labs x\rabs \bigrabs^{k}\rpar  \ ,
\end{align*}
as required.
\end{proof}

From this, we can estimate the remainder term in the Taylor series in $\alpha$ of~$F_\alpha(x)$.

\begin{proposition}
\label{sviluppo del primo ordine}
For all $\alpha\in \cH_Q$, there is a constant $C_\alpha$, independent of $\Im\alpha$, such that
\begin{equation*}
\begin{gathered}
\labs  F_{\alpha + \tau} (x)  -  F_{\alpha} (x) \rabs
\leq
 C_\alpha
{\labs \tau \rabs}
\,
 \labs x\rabs^{\Re \alpha - Q}
 \lpar  1 +
\labs  \log \labs x\rabs\rabs
 \rpar
  \lpar  1 + \labs x\rabs^{\Re \tau}
 \rpar
\\
\labs  F_{\alpha + \tau} (x)  -  F_{\alpha} (x) - {\tau}  F_{\alpha}' (x)  \rabs
\leq
C_\alpha
{\labs \tau \rabs^2}  \labs x\rabs^{\Re \alpha - Q}
 \lpar  1 +  \log^{2} \labs x\rabs \rpar
  \lpar  1 + \labs x\rabs^{\Re \tau}  \rpar
 \end{gathered}
 \end{equation*}
whenever $x \in G \setminus\{0\}$ and $\alpha + \tau \in \cH_Q$.
In particular, for $\alpha$ close to $0$,
\begin{equation}\label{F_02}
F_{\alpha} (x) =F_0(x)+\alpha F_0'(x)+O(\alpha^2) \ ,
\end{equation}
uniformly on compact subsets of $G\setminus\{0\}$.
\end{proposition}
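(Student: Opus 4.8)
The plan is to combine Taylor's theorem with integral remainder, taken in the complex variable $\alpha$, with the pointwise derivative bounds just established in Lemma \ref{stime puntuali per le derivate di I-alpha}. Since $F_\alpha(x)$ is holomorphic in $\alpha$ on $\cH_Q$, with derivatives given by the absolutely convergent integrals displayed before the lemma, I would fix $x \neq 0$ and fix $\alpha$, $\alpha + \tau \in \cH_Q$, and restrict $F_\bullet(x)$ to the complex segment $s \mapsto F_{\alpha + s\tau}(x)$. Differentiating in $s$ produces the two remainder identities
\[
F_{\alpha+\tau}(x) - F_\alpha(x) = \tau \int_0^1 F_{\alpha+s\tau}'(x) \wrt s
\]
and
\[
F_{\alpha+\tau}(x) - F_\alpha(x) - \tau\, F_\alpha'(x) = \tau^2 \int_0^1 (1-s)\, F_{\alpha+s\tau}''(x) \wrt s \ ,
\]
which are valid because the whole segment $\{\alpha + s\tau : s \in [0,1]\}$ lies in the convex open set $\cH_Q$.

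Next I would insert the lemma with $k=1$ into the first identity and with $k=2$ into the second. The one point that needs care is that the exponent $\Re(\alpha + s\tau) - Q = (\Re\alpha - Q) + s\,\Re\tau$ depends on $s$; but for $s \in [0,1]$ one has $\labs x\rabs^{s\Re\tau} \leq \max(1, \labs x\rabs^{\Re\tau}) \leq 1 + \labs x\rabs^{\Re\tau}$, which is exactly the factor $(1 + \labs x\rabs^{\Re\tau})$ appearing in the statement. The lemma's logarithmic factors are $1 + \biglabs \log\labs x\rabs\bigrabs$ and $1 + \biglabs \log\labs x\rabs\bigrabs^2 = 1 + \log^2\labs x\rabs$, matching the two displayed inequalities. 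Pulling the constant out of the integral and using $\int_0^1 \wrt s = 1$ and $\int_0^1 (1-s) \wrt s = \tfrac12$ then yields both estimates with $C_\alpha := \sup_{s \in [0,1]} C_{\alpha + s\tau, k}$; since the lemma's constants depend only on the real part, so does $C_\alpha$, giving the required independence of $\Im\alpha$. The main obstacle is verifying that this supremum is finite, which I expect to be the only genuinely technical step: it reduces to showing that $\beta \mapsto C_{\beta,k}$ is bounded on the compact segment, and this follows from the explicit description $C_{\beta,k} = \sup_{\labs x'\rabs = 1} \sum_{j \le k} \labs c_{kj}\rabs \, \labs F_\beta^{(j)}(x')\rabs$ together with the uniform convergence of the defining integrals on compact subsets of $\cH_Q$.

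Finally, to obtain \eqref{F_02} I would apply the second-order estimate with base point $0$ and increment $\alpha$, giving
\[
\labs F_{\alpha}(x) - F_0(x) - \alpha\, F_0'(x) \rabs \leq C_0 \, \labs\alpha\rabs^2 \, \labs x\rabs^{-Q} \lpar 1 + \log^2\labs x\rabs\rpar \lpar 1 + \labs x\rabs^{\Re\alpha}\rpar .
\]
On a compact set $K \subset G \setminus \{0\}$ and for $\alpha$ in a bounded neighbourhood of $0$, the factors $\labs x\rabs^{-Q}$, $1 + \log^2\labs x\rabs$ and $1 + \labs x\rabs^{\Re\alpha}$ are bounded uniformly, so the right-hand side is $O(\alpha^2)$ uniformly on $K$, which is precisely the asserted expansion.
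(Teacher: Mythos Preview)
Your proposal is correct and follows essentially the same approach as the paper: the paper also writes the second-order Taylor remainder as $\tau^2\int_0^1(1-s)\,F_{\alpha+s\tau}''(x)\,ds$ and appeals to Lemma~\ref{stime puntuali per le derivate di I-alpha}, remarking that the first inequality is similar. You have simply filled in the details the paper leaves implicit, in particular the handling of the $s$-dependent exponent via $\labs x\rabs^{s\Re\tau}\le 1+\labs x\rabs^{\Re\tau}$ and the finiteness of the constant along the segment.
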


\begin{proof}
To prove the second  inequality, we use the identity
\begin{align*}
\label{stima puntuale 1}
F_{\alpha + \tau} (x)  -  F_{\alpha} (x) - {\tau}  F_{\alpha}' (x) & =\tau^2\int_0^1(1-s) \, F_{\alpha+s\tau}''(x)\,ds\ ,
\end{align*}
and apply Lemma \ref{stime puntuali per le derivate di I-alpha}.
The first inequality may be proved similarly.
\end{proof}

Since the functions $F_{\alpha}$ are smooth and homogeneous of degree $\alpha - Q$, they are locally integrable on $G$ when $0 < \Re \alpha < Q$ and thus define distributions by integration:
\begin{equation}\label{distrib}
\begin{aligned}
\lip F_\alpha,\phi\rip
&=\int_G F_\alpha(x) \, \phi(x) \wrt x\\
&=\int_G\phi(x)\int_0^\infty t^{(\alpha-Q)/2} \, P(\delta_{t^{-1/2}}x)  \,\frac{dt}t \wrt x\\
&=2\int_0^\infty s^\alpha\int_G \phi(\delta_sx) \, P(x) \wrt x \,\frac{ds}s\
\end{aligned}
\end{equation}
for all $\phi \in C^\infty_c(G)$.
We write $I_\alpha$ for $\Gamma( \alpha/2)^{-1} F_\alpha$.
From \eqref{Ialpha}, we see that  $I_\alpha$ is the \emph{Riesz potential of order $\alpha$}, that is, the convolution kernel of $L^{-\sfrac\alpha 2}$, a power of the sub-Laplacian, and
\begin{equation}\label{Lpowers}
L^{-{\sfrac \alpha 2}}f  = f * I_\alpha\ .
\end{equation}
Note that
\begin{equation}\label{L^0}
{ I_\alpha} \bigrist{\alpha=0}=\delta_0\ .
\end{equation}

The family of Riesz potentials may be analytically continued as distributions to the half-plane $\cH_Q$ by the identity
\begin{equation*}
 I_\alpha  =  L \lpar  I_{\alpha+2} \rpar  \ .
\end{equation*}
The  analytic continuation to the strip $\{ \alpha \in \CC : -1<\Re\alpha<Q\}$ will be enough for our purposes.
An explicit expression is obtained by rewriting \eqref{distrib} in the form
\begin{equation}\label{I con s}
\begin{aligned}
 \lip I_{\alpha}, \phi \rip
&= \frac{2}{\Gamma(\frac{1}{2} \alpha)} \int_G  \lpar  \int_0^{1}  s^{\alpha} \lpar   \phi (\delta _{s} x) - \phi(0) \rpar    \,\frac{ds}{s} \rpar  P ( x) \wrt x
		+ \frac{2} {\alpha  \Gamma(\frac{1}{2} \alpha)} \, \phi (0)
\\
&\qquad +
 \frac{2}{{\Gamma(\frac{1}{2} \alpha) }}
  \int_G
\lpar
 \int_1^{\infty}  s^{\alpha} \,  \phi (\delta _{s} x)  \,\frac{ds}{s}
  \rpar
 P ( x) \wrt x \ ;
 \end{aligned}
 \end{equation}
the mean value theorem for stratified groups \cite{FS} ensures that the first integral converges.

Next, we find the Taylor expansion of $I_\alpha$ around $0$.

\begin{proposition}
Let  $\phi$ in $C_c^{\infty} (G)$.
For $\alpha$ near $0$,
\begin{align}\label{sviluppo di F_alpha}
 \lip I_{\alpha}, \phi \rip
&= \phi (0) +  \lip \Lambda, \phi \rip  {\alpha} + O(\alpha^2) \ ,
 \end{align}
 where ${\Lambda}$ is the distribution
 defined by
 \begin{equation}\label{Lambda}
 \begin{aligned}
\lip \Lambda, \phi  \rip
&=
\int_G
\lpar
 \int_0^{1}
\lpar   \phi (\delta _{s} x) - \phi(0) \rpar
    \,\frac{ds}{s}
  \rpar
 P ( x) \wrt x
 +\frac\gamma2 \,\phi(0) \\
&\qquad +
\int_G
\lpar
 \int_1^{\infty}
   \phi (\delta _{s} x)
    \,\frac{ds}{s}
  \rpar
 P ( x) \wrt x\ ,
\end{aligned}
\end{equation}
$\gamma$ being the Euler--Mascheroni constant.
\end{proposition}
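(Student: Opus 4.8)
The plan is to read the expansion straight off the explicit formula \eqref{I con s}, expanding each of its three summands about $\alpha = 0$ and combining the behaviour of the Gamma-function prefactors with the mild $\alpha$-dependence carried by the factor $s^\alpha$.

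First I would record the expansions of the prefactors. From the classical series $1/\Gamma(z) = z + \gamma z^2 + O(z^3)$ with $z = \alpha/2$, one obtains
\[
\frac{2}{\Gamma(\frac12\alpha)} = \alpha + \frac\gamma2\alpha^2 + O(\alpha^3) , \qquad
\frac{2}{\alpha\,\Gamma(\frac12\alpha)} = 1 + \frac\gamma2\alpha + O(\alpha^2) .
\]
The second of these turns the middle term of \eqref{I con s} immediately into $\phi(0) + \frac\gamma2\alpha\,\phi(0) + O(\alpha^2)$, which already supplies both the constant term $\phi(0)$ in \eqref{sviluppo di F_alpha} and the Euler--Mascheroni contribution $\frac\gamma2\phi(0)$ in \eqref{Lambda}.

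Next I would treat the two $x$-integrals of \eqref{I con s} as functions of $\alpha$,
\[
g_1(\alpha) = \int_G \biglpar \int_0^1 s^\alpha \lpar \phi(\delta_s x) - \phi(0) \rpar \frac{ds}{s} \bigrpar P(x) \wrt x ,
\]
\[
g_\infty(\alpha) = \int_G \biglpar \int_1^\infty s^\alpha \, \phi(\delta_s x) \, \frac{ds}{s} \bigrpar P(x) \wrt x ,
\]
and show that both are continuously differentiable (indeed holomorphic, since the integrand depends on $\alpha$ only through the entire function $s^\alpha$) for $\alpha$ near $0$ within the strip $-1 < \Re\alpha < Q$. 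For $g_1$, the mean value theorem for stratified groups bounds $\labs \phi(\delta_s x) - \phi(0) \rabs$ by $C s \labs x \rabs$, which cancels the $s^{-1}$ singularity and permits differentiation under the integral sign, the factor $\log s$ produced by $\partial_\alpha s^\alpha$ remaining integrable on $(0,1)$. For $g_\infty$, the compact support of $\phi$ forces $\phi(\delta_s x)$ to vanish once $s\labs x\rabs$ exceeds the radius of the support, so the inner integral runs effectively over a bounded $s$-interval; the resulting logarithmic growth in $\labs x \rabs$, together with any $\log s$ factors from differentiating, is integrated against the rapidly decreasing and locally bounded density $P$, and $\int_G \labs \log\labs x\rabs\rabs^k \, P(x) \wrt x$ is finite for every $k$. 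Hence $g_1(0)$ and $g_\infty(0)$ equal the first and third integrals appearing in \eqref{Lambda}.

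Finally I would multiply out. Since $\frac{2}{\Gamma(\frac12\alpha)} = \alpha + O(\alpha^2)$ and $g_1(\alpha) + g_\infty(\alpha) = g_1(0) + g_\infty(0) + O(\alpha)$, the first and third terms of \eqref{I con s} contribute $\alpha\lpar g_1(0) + g_\infty(0)\rpar + O(\alpha^2)$. Adding the middle term and reading off the coefficient of $\alpha$ reproduces exactly $\lip \Lambda, \phi \rip$, which yields \eqref{sviluppo di F_alpha}. The routine parts here are the Gamma expansion and the final bookkeeping; the only genuine work, and the step I expect to be the main obstacle, is establishing that $g_1$ and $g_\infty$ are truly $C^1$ near $0$ with uniformly controlled derivatives, so that the remainders are honestly $O(\alpha^2)$ and not merely $o(\alpha)$. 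This rests on the quantitative estimates above — the mean value theorem near the origin and the interplay of compact support with the Schwartz decay of $P$ at infinity — applied uniformly for $\alpha$ in a neighbourhood of $0$.
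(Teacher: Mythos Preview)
Your proposal is correct and follows essentially the same route as the paper: both start from \eqref{I con s}, expand the Gamma prefactors about $\alpha=0$, and control the $\alpha$-dependence coming from $s^\alpha$ in the two integral terms (the paper does this via the identity $s^t = 1 + \log s\int_0^t s^u\,du$, which is just the integral form of the $C^1$ estimate you spell out). Your treatment is in fact more detailed than the paper's, which simply asserts that \eqref{sviluppo di F_alpha} follows from \eqref{I con s} once these two expansions are in hand.
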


\begin{proof}
Clearly, \eqref{Lambda} defines a distribution.
Since $\Gamma'(1)=-\gamma$ and hence
\begin{equation}\label{1/Gamma}
\frac1{\Gamma(t)}
=\frac t{\Gamma(t+1)}=t-\gamma t^2+O(t^3)\quad \text{as $t\to0$} \ ,
\end{equation}
and
\begin{equation*}
s^t
=1+\log s\int_0^t s^u\,du\ ,
\end{equation*}
\eqref{sviluppo di F_alpha} follows from \eqref{I con s}.
\end{proof}

Note that, in analogy with \eqref{Lpowers},
\begin{equation*}
f * \Lambda = -\log L^{\sfrac12} f \ .
\end{equation*}

The next proposition expresses the distribution $\Lambda$ in terms of the homogeneous norm $\absdot_0$, defined in \eqref{normal}.

\begin{proposition}
\label{lambda 2}
There is a positive number $a$ such that
\begin{equation}\label{lambda tilde}
\lip {\Lambda}, \phi \rip
=\frac 12
\int_{\{x \in G: \labs x\rabs_0 < a \}}
(\phi (x) - \phi(0) )
 \labs x\rabs_0^{-Q}
 \wrt x
 +
 \frac 12
  \int_{\{x \in G: \labs x\rabs_0 > a \}}
  \phi ( x) \labs x\rabs_0^{-Q}
 \wrt x\  ,
 \end{equation}
 for all  $\phi \in C^\infty_c$.
 \end{proposition}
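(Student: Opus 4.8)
The plan is to pass to polar coordinates adapted to the homogeneous norm $\absdot_0$ and to exploit the particular normalization that defines this norm. Writing $\Sigma = \{x \in G : \labs x\rabs_0 = 1\}$, there is a finite positive measure $d\sigma$ on $\Sigma$ such that $\int_G f(x)\wrt x = \int_\Sigma\int_0^\infty f(\delta_\rho\omega)\,\rho^{Q-1}\wrt\rho\,d\sigma(\omega)$ for every integrable $f$. The crucial point is that, since $\absdot_0 = F_0^{-\sfrac1Q}$, we have $F_0(\omega) = 1$ for every $\omega\in\Sigma$; unwinding the definition \eqref{Ialpha} of $F_0$ and substituting $u = t^{-\sfrac12}\rho$, this is equivalent to the identity
\[
\int_0^\infty P(\delta_u\omega)\,u^{Q-1}\wrt u = \frac12 \qquad (\omega\in\Sigma),
\]
the value $\tfrac12$ being independent of the direction $\omega$. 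This uniformity is exactly what will force the reduced kernel to be a direction-independent multiple of $\labs x\rabs_0^{-Q}$.

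Next I would rewrite the two integrals defining $\lip\Lambda,\phi\rip$ in \eqref{Lambda} in these coordinates. Applying polar coordinates to the outer $\wrt x$ integral, writing $x = \delta_\rho\omega$, and substituting $r = s\rho$ in the inner integral turns $\delta_s x$ into $\delta_r\omega$ and $\frac{ds}{s}$ into $\frac{dr}{r}$. After swapping the order of the $\rho$ and $r$ integrations (legitimate by Fubini, since $\phi$ has compact support and $P$ is Schwartz, so all integrands decay rapidly) the two contributions become
\[
\int_\Sigma\int_0^\infty(\phi(\delta_r\omega)-\phi(0))\,\lpar\tfrac12 - g_\omega(r)\rpar\,\frac{dr}{r}\,d\sigma(\omega)
\quad\text{and}\quad
\int_\Sigma\int_0^\infty\phi(\delta_r\omega)\,g_\omega(r)\,\frac{dr}{r}\,d\sigma(\omega),
\]
where $g_\omega(r) = \int_0^r P(\delta_\rho\omega)\rho^{Q-1}\wrt\rho$, so that $g_\omega(0)=0$ and $g_\omega(\infty) = \tfrac12$ by the identity above. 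Adding these and the term $\tfrac\gamma2\phi(0)$, the products $\phi(\delta_r\omega)g_\omega(r)$ cancel, leaving
\[
\lip\Lambda,\phi\rip = \int_\Sigma\int_0^\infty\left[\tfrac12(\phi(\delta_r\omega)-\phi(0)) + \phi(0)g_\omega(r)\right]\frac{dr}{r}\,d\sigma(\omega) + \tfrac\gamma2\phi(0).
\]
The bracketed integrand is $O(r)$ as $r\to0$ and, once $r$ is large enough that $\phi(\delta_r\omega)=0$, equals $\phi(0)\lpar g_\omega(r)-\tfrac12\rpar$, which decays rapidly; hence the integral converges absolutely, even though its two summands are individually only logarithmically divergent at infinity.

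Finally I would write the claimed right-hand side in the same coordinates. Since $\labs x\rabs_0^{-Q}\wrt x = \frac{d\rho}{\rho}\,d\sigma(\omega)$, it equals $\tfrac12\int_\Sigma\int_0^\infty\bigl[(\phi(\delta_\rho\omega)-\phi(0)) + \phi(0)\mathbf{1}_{\{\rho>a\}}\bigr]\frac{d\rho}{\rho}\,d\sigma(\omega)$. Subtracting, the $\tfrac12(\phi(\delta_r\omega)-\phi(0))$ terms cancel exactly, and the difference collapses to $\phi(0)\bigl(\tfrac\gamma2 + \Psi(a)\bigr)$, where $\Psi(a) = \int_\Sigma\int_0^\infty\bigl[g_\omega(r) - \tfrac12\mathbf{1}_{\{r>a\}}\bigr]\frac{dr}{r}\,d\sigma(\omega)$ converges for every $a>0$ (the integrand is $O(r^Q)$ at $0$ and a rapidly decaying tail of $P$ at infinity). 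A direct computation gives $\Psi(a_2)-\Psi(a_1) = \tfrac12\sigma(\Sigma)\log(a_2/a_1)$, so $\Psi$ is continuous and strictly increasing from $-\infty$ to $+\infty$; hence there is a unique $a>0$ with $\Psi(a) = -\tfrac\gamma2$, and for this $a$ the two sides agree for every $\phi$.

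The main obstacle is really the first step: recognizing that the defining normalization $F_0(\omega)=1$ is equivalent to the direction-independent identity $\int_0^\infty P(\delta_u\omega)u^{Q-1}\wrt u = \tfrac12$. This is precisely what makes the reduced kernel proportional to $\labs x\rabs_0^{-Q}$ with a constant that does not depend on $\omega$; for a general homogeneous norm the analogous radial integral would vary with the direction and no such clean formula could hold. The remaining work — justifying Fubini, tracking the cancellation of the logarithmically divergent pieces, and solving $\Psi(a)=-\tfrac\gamma2$ — is then routine.
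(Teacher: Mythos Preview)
Your argument is correct and follows a genuinely different route from the paper's. The paper does not manipulate formula \eqref{Lambda} directly; instead it writes a second analytic continuation of $\lip I_\alpha,\phi\rip$, splitting the integral over $G$ at radius $a$ in the norm $\absdot_0$ into three pieces $T_1(\alpha)+T_2(\alpha)+T_3(\alpha)$, and Taylor-expands each in $\alpha$. The term $T_2(\alpha)=\phi(0)\,\Gamma(\alpha/2)^{-1}\int_{\{\labs x\rabs_0<a\}}F_\alpha(x)\wrt x$ is computed in polar coordinates using $F_\alpha(x')=1+\alpha F_0'(x')+O(\alpha^2)$ on the sphere; $a$ is chosen so that the linear-in-$\alpha$ part of $T_2$ vanishes, after which \eqref{lambda tilde} is read off as the linear term of the full expansion, and matching constant terms yields the byproduct $c_0=\sigma(S)=2$. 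Your approach instead starts from \eqref{Lambda}, observes that $F_0(\omega)=1$ on the unit sphere unwinds to the direction-independent identity $\int_0^\infty P(\delta_u\omega)\,u^{Q-1}\wrt u=\tfrac12$, and after a Fubini step reduces everything to solving $\Psi(a)=-\gamma/2$ for a strictly monotone $\Psi$. This is more elementary---no further expansion of $F_\alpha$ or of $1/\Gamma$ is needed---and it gives uniqueness of $a$ for free; the paper's method, on the other hand, makes $a$ explicit via $c_0'+2\log a-2\gamma=0$ and explains the coefficient $\tfrac12$ as half of the sphere measure rather than as a heat-kernel integral. Incidentally, integrating your key identity over $\Sigma$ and using $\int_G P=1$ recovers the paper's $\sigma(S)=2$.
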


\begin{proof}
We start from a different expression for the analytic continuation  \eqref{I con s} of the Riesz potentials.

Fix a positive number $a$, to be specified later.
If $\phi  \in C_c^{\infty} (G)$ and $0 < \Re \alpha < Q$, then
\begin{equation}\label{I con s*}
\begin{aligned}
 \lip I_{\alpha}, \phi \rip
 &=
\frac{1}{{\Gamma(\frac{1}{2} \alpha)}} \int_G \phi (x)  \,F_{\alpha} (x)
\, \wrt x
\\[.5ex]
&=   \frac{1}{{\Gamma(\frac{1}{2} \alpha)}}
\int_{\{x \in G: \labs x\rabs_0 < a \}}
 \lpar   \phi (x) - \phi(0) \rpar
F_{\alpha} (x)
 \wrt x
\\[.5ex]
&\qquad+  \frac{ \phi(0)} {\Gamma(\frac{1}{2} \alpha)}
\int_{\{x \in G: \labs x\rabs_0 < a \}}  F_{\alpha} (x) \wrt x
+ \frac{1}{{\Gamma(\frac{1}{2} \alpha) }}  \int_{\{x \in G: \labs x\rabs_0 > a \}}  \phi ( x) F_{\alpha} (x) \wrt x\\[.5ex]
&=T_1(\alpha)+T_2(\alpha)+T_3(\alpha)\ ,
 \end{aligned}
\end{equation}
say.
Clearly, $T_1$ is defined when $-1 < \Re \alpha < Q$, by the stratified mean value theorem, and $T_3$ is trivially defined for every $\alpha$. Moreover $T_1$ and $T_3$  are analytic.

To treat $T_2$, we use polar coordinates.
Write $x = \delta_r x'$, where $\labs x\rabs_0 = r$ and $x'$ is in $S$, the unit sphere for the norm $\absdot_0$,.
We denote  by $\sigma$ the unique measure  on $S$  (see \cite{FS})  such that
\begin{equation*}
\int_{\{x \in G: \labs x\rabs_0 < R \}}
\phi (x)
 \wrt x
 =
 \int_0^R
 \lpar
  \int_S
\phi ( \delta_r x')
 \,d\sigma  (x')
 \rpar
  r^{Q - 1} \,dr\ .
\end{equation*}
Since $F_{\alpha} (\delta_r x) = r^{\alpha - Q} F_{\alpha} ( x)$, it follows 
 that
\begin{equation*} 
\int_{\{x \in G: \labs x\rabs_0 < a \}}
F_{\alpha} (x)
 \wrt x
 =
 \int_0^a
 \lpar
  \int_{S}
 F_{\alpha} ( x')
 \,d\sigma  (x')
 \rpar
  r^{\alpha - 1} \,dr
  =
  c_{\alpha} \, \frac{a^{\alpha}}{\alpha} \ ,
\end{equation*}
where
\begin{equation*}
c_{\alpha}
=  \int_{S}
 F_{\alpha} (x')
 \,d\sigma (x') \ .
\end{equation*}
By \eqref{F_02},
\begin{equation*}
 F_{\alpha} (x') = 1+ \alpha F_0' (x')
 + O(\alpha^2) \ ,
\end{equation*}
for small $\alpha$, where $O(\alpha^2)$ is uniform in $x'$ in $S$.
This gives
\begin{equation*} 
c_{\alpha}
=  c_{0} +  c'_0 \alpha + O(\alpha^2) \ ,
\end{equation*}
where $c_{0} =  \sigma (S)$ and
\begin{equation*}
c'_0
=  \int_S
F_{0}' (x')
 \,d\sigma (x') \ .
\end{equation*}
Hence, using \eqref{1/Gamma},
\begin{equation*}
T_2(\alpha)
= \frac{c_{\alpha} a^{\alpha}}  {\alpha \Gamma(\frac{1}{2} \alpha)} \,\phi(0)
= \frac{\phi(0)}{ 2} \lpar  c_{0} + (c'_0 + c_{0} \log a - c_{0} \gamma) \alpha  + O(\alpha^2) \rpar  .
\end{equation*}
Since $c_0 \ne 0$, we may choose $a$ so that the linear term in the above formula vanishes.
Plugging this expansion  into \eqref{I con s*}, we deduce that
\begin{equation}\label{I con | |}
\begin{aligned}
 \lip I_{\alpha}, \phi \rip &=  \frac{1}{{\Gamma(\frac{1}{2} \alpha)}}
\int_{\{x \in G: \labs x\rabs_0 < a \}}
 \lpar   \phi (x) - \phi(0) \rpar
F_{\alpha} (x)
 \wrt x
\\
&\qquad+\lpar  \frac{c_0}{ 2} +O(\alpha^2)\rpar \phi (0)+
  \frac{1}{{\Gamma(\frac{1}{2} \alpha)}}
  \int_{\{x \in G: \labs x\rabs_0 > a \}}
  \phi ( x) \fn F_{\alpha} (x)
 \wrt x \ ,
 \end{aligned}
 \end{equation}
 which, compared with \eqref{L^0}, yields
\begin{equation}
\label{c-0}
c_0 = 2 \ .
\end{equation}
The linear term in $\alpha$ in \eqref{I con | |} gives $\lip\Lambda,\phi\rip$.
The result follows from \eqref{1/Gamma}.
\end{proof}

When $\Re \alpha>0$, $\Re \beta>0$ and $\Re (\alpha + \beta) < Q$, the convolution of the Riesz potentials of orders $\alpha$ and $\beta$ is defined pointwise by absolutely convergent integrals, and the identity
\begin{equation}\label{algebra}
I_{\alpha} * I_{\beta}   = I_{\alpha + \beta}
\end{equation}
is satisfied pointwise and in the sense of distributions.
This is consistent with the functional identity $L^{-\sfrac\alpha2}L^{-\sfrac\beta2}=L^{-\sfrac{(\alpha+\beta)}2}$
and can be derived from \eqref{Ialpha} and the properties of the heat kernel.

We extend this result using analytic continuation.

\begin{proposition}\label{convI}
Let $\phi\in C^\infty_c(G)$.
The identity
\begin{equation*}
\lpar \phi* I_{\alpha} \rpar  * I_{\beta}  = \phi* I_{\alpha + \beta}
 \end{equation*}
 holds if $\Re \alpha >-1$, $\Re \beta>-1$ and $\Re (\alpha + \beta) < Q$.
 \end{proposition}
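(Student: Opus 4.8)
The plan is to prove the identity by analytic continuation in the pair of complex variables $(\alpha,\beta)$, taking the already-established pointwise and distributional identity \eqref{algebra} as the base case. Let $D$ be the domain cut out by the hypotheses, intersected with the region where all three distributions are defined, so that
\[
D = \lpar \cH_Q \times \cH_Q \rpar \cap \{(\alpha,\beta) : \Re\alpha > -1,\ \Re\beta > -1,\ \Re(\alpha+\beta) < Q\} \ .
\]
This set is convex in the real parts and unrestricted in the imaginary parts, hence connected, and it contains the nonempty open subdomain $D_+ = D \cap \{\Re\alpha > 0,\ \Re\beta > 0\}$. I would fix $\phi \in C^\infty_c(G)$ together with a test function $\psi \in C^\infty_c(G)$ and compare the two functions
\[
(\alpha,\beta) \mapsto \lip (\phi * I_\alpha) * I_\beta, \psi \rip
\qquad\text{and}\qquad
(\alpha,\beta)\mapsto \lip \phi * I_{\alpha+\beta}, \psi \rip
\]
on $D$, showing that both are holomorphic and that they agree on $D_+$.

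On $D_+$ the argument is direct. Here $\Re\alpha,\Re\beta > 0$, so $I_\alpha = \Gamma(\alpha/2)^{-1}F_\alpha$ and $I_\beta$ are locally integrable functions, homogeneous of degrees $\alpha - Q$ and $\beta - Q$ by \eqref{Ialpha}. Using the bound of Lemma \ref{stime puntuali per le derivate di I-alpha} and the compact support of $\phi$, the function $\phi * I_\alpha$ is smooth and decays like $\labs x\rabs^{\Re\alpha - Q}$ at infinity. Since $\Re\alpha + \Re\beta < Q$, the iterated integral
\[
\bigl((\phi*I_\alpha)*I_\beta\bigr)(x)=\int_G\int_G \phi(xy^{-1}z^{-1})\,I_\alpha(z)\,I_\beta(y)\wrt z\wrt y
\]
converges absolutely (this is the standard Riesz composition estimate underlying \eqref{algebra}). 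Hence Fubini gives associativity, $(\phi*I_\alpha)*I_\beta = \phi*(I_\alpha*I_\beta)$, and \eqref{algebra} turns the right side into $\phi*I_{\alpha+\beta}$. Thus the two functions coincide on $D_+$.

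For holomorphy on all of $D$, I would use that $\alpha \mapsto I_\alpha$ is a holomorphic family of distributions on $\cH_Q$: this follows from the holomorphy of $F_\alpha$ and the explicit continuation \eqref{I con s}, the factor $\Gamma(\alpha/2)^{-1}$ cancelling the poles, consistently with \eqref{L^0}. Consequently $\phi*I_{\alpha+\beta}(x) = \lip I_{\alpha+\beta}, z\mapsto\phi(xz^{-1})\rip$ depends holomorphically on $\alpha+\beta$, so $\lip\phi*I_{\alpha+\beta},\psi\rip$ is manifestly holomorphic on $D$. For the left-hand pairing I would first record, from Lemma \ref{stime puntuali per le derivate di I-alpha} and Proposition \ref{sviluppo del primo ordine}, that $\phi*I_\alpha$ and its $\alpha$-derivatives obey decay bounds of the form $\labs x\rabs^{\Re\alpha-Q}$ (with logarithmic corrections), uniformly for $\alpha$ in compact subsets; these estimates justify defining $(\phi*I_\alpha)*I_\beta$ through the distributional action of $I_\beta$ and show that $\lip(\phi*I_\alpha)*I_\beta,\psi\rip$ is holomorphic in each variable separately (differentiating under the integral), whence jointly holomorphic by Hartogs' theorem. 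Since both sides are holomorphic on the connected domain $D$ and agree on the nonempty open subset $D_+$, the identity theorem for holomorphic functions of several complex variables forces them to agree throughout $D$; as $\psi$ is arbitrary, the stated identity follows.

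The main obstacle is the middle point: making rigorous sense of the iterated convolution $(\phi*I_\alpha)*I_\beta$ when $\Re\beta \le 0$, so that $I_\beta$ is a genuine distribution rather than a function. The difficulty is that $\phi*I_\alpha$ is not compactly supported, so pairing it against $I_\beta$ requires extending the action of $I_\beta$ from $C^\infty_c(G)$ to a class of smooth functions with prescribed decay at infinity. The homogeneity estimates of Lemma \ref{stime puntuali per le derivate di I-alpha}, together with a splitting of $I_\beta$ into a part concentrated near the origin and a smooth, decaying tail as in \eqref{I con s}, are exactly what is needed to control this extension and to verify the holomorphy claimed above; everything else reduces to the associativity computation on $D_+$ and the identity theorem.
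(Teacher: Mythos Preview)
Your proposal is correct and follows essentially the same route as the paper: establish the identity on the region $\Re\alpha,\Re\beta>0$ via \eqref{algebra}, then analytically continue, with the key step being to make sense of $(\phi*I_\alpha)*I_\beta$ for non-positive $\Re\beta$ by splitting $I_\beta$ into a local singular part and a decaying tail. The paper executes this last step concretely by writing out the three-term decomposition from \eqref{I con s*} applied to $u_\alpha=\phi*I_\alpha$ (rather than \eqref{I con s}, though either works), and then observes directly that each term is absolutely convergent and analytic in $(\alpha,\beta)$; your invocation of Hartogs' theorem and pairing against an auxiliary $\psi$ are not needed once the pointwise decomposition is in hand, but they do no harm.
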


 \begin{proof}
 We must show that the left-hand side is well-defined and analytic for $\alpha$ and $\beta$ in the given range.

Assume that $\beta\ne0$.
The inner convolution is well-defined and produces a smooth function $u_\alpha$, satisfying $u_\alpha(x) = O( \labs x\rabs_0^{-Q+\Re\alpha})$ at infinity.
By \eqref{I con s*},
\begin{equation*}
 \begin{aligned}
 \lpar \phi* I_{\alpha} \rpar  * I_{\beta}(x)
 &=
 u_\alpha* I_{\beta} (x) \\
 &=
\int_{\{y \in G: \labs y \rabs_0 < a \}}
 \lpar   u_\alpha(xy^{-1}) - u_\alpha(x) \rpar
I_{\beta} (y)
 \,dy
\\
&\qquad+  u_\alpha(x)
\int_{\{y \in G: \labs y \rabs_0 < a \}}
I_{\beta} (y)
 \,dy
+
  \int_{\{y \in G: \labs y \rabs_0 > a \}}
  u_\alpha( xy^{-1})
\fn I_{\beta} (y)
 \,dy
 \end{aligned}
\end{equation*}
for all $x \in G$; the  integrals are absolutely convergent and each term depends analytically on $\alpha$ and $\beta$. The conclusion follows from \eqref{algebra} by analytic continuation.
 \end{proof}

\section{Hardy inequalities}

In this section, we use the distributions discussed above to obtain a family of Hardy-type inequalities.

Take a homogeneous norm $\absdot$ on $G$.
For $-1 < \Re\alpha < Q$, we define the operator $T_{\alpha}$ on $C^{\infty}_c (G)$ by
\begin{equation*}
T_{\alpha} f
=  \absdot^{- \alpha} L^{- \sfrac{\alpha}{2}} f = \absdot^{- \alpha}(f*I_\alpha) \ .
\end{equation*}
The operator $T^{*}_{\alpha}$, given by
\begin{equation*}
T^{*}_{\alpha} g
=  (\absdot^{- \bar\alpha} g)  * I_{\bar\alpha} \ ,
\end{equation*}
satisfies the identity
\begin{equation}\label{Tstar}
\lip f,T^{*}_{\alpha} g\rip=\lip T_{\alpha} f,g\rip
\end{equation}
for all $f,g\in C^\infty_c(G)$.
The right-hand side of \eqref{Tstar} is holomorphic in $\alpha$ and $T_0$ is the identity.

In this section we prove Theorem A, of which  we repeat the statement for the reader's convenience.
\begin{theorem*}
Suppose that $\absdot$ is a homogeneous norm on $G$, that $1 < p < \infty$ and that $0 < \alpha < \sfrac{Q}{p}$.
Then  the operator $T_{\alpha}$ extends uniquely to a bounded operator  on $L^p(G)$.
For the particular homogeneous norm $\absdot_0$, the operator norm $\lopnorm T_\alpha \ropnorm_{p,p}$ satisfies
\[
\lopnorm T_\alpha \ropnorm_{p,p} \leq 1 + C \alpha + O(\alpha^2)\ .
\]
\end{theorem*}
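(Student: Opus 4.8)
The plan is to realize $T_\alpha$ as an integral operator with a nonnegative kernel and then apply the Schur criterion. For $f \in C^\infty_c(G)$ and $0 < \alpha < Q$ one writes $T_\alpha f(x) = \int_G K_\alpha(x,y) f(y) \wrt y$ with $K_\alpha(x,y) = \labs x\rabs^{-\alpha} I_\alpha(y^{-1}x)$; since $I_\alpha$ is positive for real $\alpha \in (0,Q)$ and homogeneous of degree $\alpha - Q$, the kernel $K_\alpha$ is nonnegative, and a change of variables using the unimodularity of $G$ together with the dilation homogeneity shows that $K_\alpha(\delta_r x, \delta_r y) = r^{-Q} K_\alpha(x,y)$. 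This is exactly the scale-invariant homogeneity that the Schur test tolerates.

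Next I would run the Schur test with the power weight $w = \absdot^{-\gamma}$ for a parameter $\gamma$ to be chosen. The two Schur integrals $\int_G K_\alpha(x,y)\, w(y)^{p'} \wrt y$ and $\int_G K_\alpha(x,y)\, w(x)^{p} \wrt x$ are, by the degree-$(-Q)$ homogeneity just noted, again homogeneous of degrees $-\gamma p'$ and $-\gamma p$ respectively, so each reduces via polar coordinates for $\absdot$ to a radial integral times an integral over the unit sphere, and the required bounds hold provided these converge. Inspecting the behaviour $I_\alpha(z) \sim \labs z\rabs^{\alpha - Q}$ at $z = 0$ and at infinity, I expect convergence of the first integral precisely when $\alpha/p' < \gamma < Q/p'$ and of the second precisely when $0 < \gamma < (Q-\alpha)/p$. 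These two windows overlap if and only if $\alpha/p' < (Q-\alpha)/p$, and since $p + p' = p p'$ this is equivalent to $\alpha < Q/p$, recovering exactly the hypothesised range and confirming its sharpness. Fixing any admissible $\gamma$ then gives finite Schur constants $A(\alpha), B(\alpha)$ and the bound $\lopnorm T_\alpha \ropnorm_{p,p} \le A(\alpha)^{1/p'} B(\alpha)^{1/p}$; uniqueness of the bounded extension follows from the density of $C^\infty_c(G)$. This settles the first assertion of the theorem.

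For the refined estimate I would specialise to $\absdot_0$, using its defining property $F_0 = \absdot_0^{-Q}$ together with the distributional expansion $\lip I_\alpha, \phi \rip = \phi(0) + \alpha \lip \Lambda, \phi \rip + O(\alpha^2)$ from \eqref{sviluppo di F_alpha}. Since $I_0 = \delta_0$, at $\alpha = 0$ both Schur integrals collapse to the identity and give $A(0) = B(0) = 1$ for any admissible $\gamma$, so everything is a matter of differentiating in $\alpha$. Writing the first Schur integral on the sphere as $\lip I_\alpha, \psi_x \rip$ with $\psi_x(z) = \labs x z^{-1} \rabs_0^{-\gamma p'}$ and applying the expansion gives, once the remainder is controlled uniformly in $x$, the formula $A(\alpha) = 1 + \alpha \sup_{\labs x\rabs_0 = 1} \lip \Lambda, \psi_x \rip + O(\alpha^2)$, and similarly for $B(\alpha)$. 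Combining the two yields $A(\alpha)^{1/p'} B(\alpha)^{1/p} = 1 + C\alpha + O(\alpha^2)$, with $C$ assembled from the two first-order coefficients, which is the claimed bound.

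The main obstacle will be this last step: the expansion in \eqref{sviluppo di F_alpha} is stated for a fixed $\phi \in C^\infty_c(G)$, whereas the test functions $\psi_x$ are neither compactly supported nor smooth at $z = x$, where $\absdot_0^{-\gamma p'}$ is singular. I would therefore need to check that $\Lambda$, in the form given by Proposition \ref{lambda 2}, extends to these $\psi_x$, and, more delicately, that the $O(\alpha^2)$ remainder is uniform both in $x$ over the compact sphere $S$ and in the tails of the integrals; this is where the pointwise bounds of Lemma \ref{stime puntuali per le derivate di I-alpha} and Proposition \ref{sviluppo del primo ordine}, which control $F_\alpha^{(k)}$ with explicit powers of $\labs x\rabs$ and $\biglabs \log\labs x\rabs\bigrabs$, should do the real work. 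The homogeneity-matching of the Schur integrals and the continuity of $x \mapsto \lip \Lambda, \psi_x \rip$ on $S$ would then guarantee finiteness of the suprema and the uniformity needed to close the argument.
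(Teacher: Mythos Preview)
Your proposal is correct and follows essentially the same route as the paper: Schur's test with a power weight to obtain boundedness (the paper uses $u_\gamma=\absdot^{\gamma-Q}$ rather than $\absdot^{-\gamma}$, but this is only a reparametrisation, and the resulting admissibility window for $\gamma$ matches), followed by the Taylor expansion $I_\alpha=\delta_0+\alpha\Lambda+O(\alpha^2)$ to extract the linear bound on the Schur constants for $\absdot_0$. The obstacle you flag---extending the expansion \eqref{sviluppo di F_alpha} from $C^\infty_c$ test functions to the singular, non-compactly-supported functions $\psi_{x'}$---is exactly what the paper isolates as Lemma~\ref{calC}, and the tools you name (Proposition~\ref{lambda 2} and the pointwise bounds of Lemma~\ref{stime puntuali per le derivate di I-alpha}) are precisely what make that extension go through.
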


We first prove that $T_\alpha$ is bounded on $L^p(G)$ when $1<p<\infty$ and $0<\alpha<\sfrac Qp$.

\begin{theorem}\label{Talpha}
Let $1 < p < \infty$.
If $0 < \alpha < \sfrac{Q}{p}$, then  the operator $T_{\alpha}$ extends uniquely to a bounded operator  on $L^p(G)$.
\end{theorem}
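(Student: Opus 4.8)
The plan is to realize $T_\alpha$ as an integral operator with a nonnegative kernel and apply the Schur criterion with a power weight. Writing out the convolution \eqref{Lpowers}, we have
\[
T_\alpha f(x) = \labs x\rabs^{-\alpha}\int_G I_\alpha(y^{-1}x)\,f(y)\wrt y \,,
\]
so that $T_\alpha$ is the integral operator with kernel $K(x,y) = \labs x\rabs^{-\alpha}\, I_\alpha(y^{-1}x)$. Since $\alpha$ is real, $F_\alpha$ and hence $I_\alpha$ are nonnegative, so $K\ge0$. Taking $k=0$ in Lemma \ref{stime puntuali per le derivate di I-alpha} and using the equivalence of homogeneous norms \eqref{norm}, one gets the pointwise bounds $I_\alpha(z)\le C_\alpha\labs z\rabs_0^{\alpha-Q}$ and $\labs x\rabs^{-\alpha}\le C\labs x\rabs_0^{-\alpha}$. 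It therefore suffices to bound the operator on $L^p(G)$ whose kernel is $\labs x\rabs_0^{-\alpha}\labs y^{-1}x\rabs_0^{\alpha-Q}$.

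Recall the Schur criterion: if $K\ge0$ and there exist a positive measurable $h$ and constants $C_1,C_2$ with
\[
\int_G K(x,y)\,h(y)^{p'}\wrt y \le C_1\,h(x)^{p'},
\qquad
\int_G K(x,y)\,h(x)^{p}\wrt x \le C_2\,h(y)^{p},
\]
then $K$ defines an operator bounded on $L^p(G)$, with norm at most $C_1^{\sfrac1{p'}}C_2^{\sfrac1p}$. I would take $h(x)=\labs x\rabs_0^{-\sigma}$ for a parameter $\sigma>0$ to be chosen. Both conditions then reduce to estimating integrals of the shape $\int_G\labs y^{-1}x\rabs_0^{\alpha-Q}\labs y\rabs_0^{-b}\wrt y$. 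The central observation is that such an integral is homogeneous in $x$: substituting $y\mapsto\delta_r y$ and using that the dilations are automorphisms, that Haar measure is dilation-homogeneous of degree $Q$, and that $\absdot_0$ is homogeneous of degree $1$, the integral scales as $r^{\alpha-b}$, so it equals a constant times $\labs x\rabs_0^{\alpha-b}$ provided it converges. Convergence is governed by the behaviour near $y=0$, near the singularity $y=x$, and at infinity; this is the standard Stein--Weiss integrability analysis, and since the two singularities stay apart as $x$ ranges over the compact unit sphere, the constant is finite.

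Carrying this out, the degrees match exactly so that the first Schur condition produces $C\,\labs x\rabs_0^{-\sigma p'}=C\,h(x)^{p'}$ and the second produces $C\,\labs y\rabs_0^{-\sigma p}=C\,h(y)^{p}$, the constraints for convergence being $\alpha<\sigma p'<Q$ for the first and $0<\sigma p<Q-\alpha$ for the second. These conditions on the single parameter $\sigma$ admit a common solution if and only if $\sfrac{\alpha}{p'}<\sfrac{(Q-\alpha)}p$, which, using $p+p'=pp'$, is exactly the hypothesis $0<\alpha<\sfrac Qp$ (the remaining constraint $\sigma p'<Q$ is then automatic since $\alpha<Q$). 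Fixing such a $\sigma$, the Schur criterion gives $\lopnorm T_\alpha\ropnorm_{p,p}<\infty$ on $C^\infty_c(G)$; as $C^\infty_c(G)$ is dense in $L^p(G)$, the operator extends uniquely to a bounded operator on $L^p(G)$.

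I expect the main obstacle to be the integrability analysis underpinning the two Schur estimates, in particular verifying that the kernel integrals are uniformly finite on the unit sphere rather than merely finite pointwise. The non-commutativity of $G$ is not a genuine difficulty here, since the argument uses only unimodularity, the dilation-homogeneity of Haar measure, and the homogeneity of $\absdot_0$; no triangle inequality for the norm is needed. The delicate feature is that the admissible range of the Schur exponent $\sigma$ degenerates to a point precisely at $\alpha=\sfrac Qp$, so the endpoint restriction in the hypothesis is forced by, and must emerge cleanly from, the convergence conditions of the two integrals.
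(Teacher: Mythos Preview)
Your proposal is correct and follows essentially the same route as the paper: apply Schur's test with a power weight $h(x)=\labs x\rabs^{-\sigma}$, reduce both Schur integrals to homogeneous convolutions of the form $\labs\,\cdot\,\rabs^{\alpha-Q}*\labs\,\cdot\,\rabs^{-b}$, and read off the convergence constraints, whose compatibility is exactly $0<\alpha<Q/p$. The paper parametrises the weight as $u_\gamma=\absdot^{\gamma-Q}$ (your $\sigma$ is the paper's $Q-\gamma$), works directly with the positive homogeneous kernel $I_\alpha$ rather than first dominating it by $C\labs\,\cdot\,\rabs_0^{\alpha-Q}$ via Lemma~\ref{stime puntuali per le derivate di I-alpha}, and phrases the two Schur conditions as $T_\alpha(u^{p'})\le A\,u^{p'}$ and $T_\alpha^*(u^p)\le B\,u^p$; these are cosmetic differences, and your convergence intervals $\alpha<\sigma p'<Q$ and $0<\sigma p<Q-\alpha$ coincide with the paper's range \eqref{gamma} after the change of variable.
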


\begin{proof}
Fix $p$ and $\alpha$ as enunciated, and let $p'$ denote the conjugate index to $p$.
Since the integral kernel of $T_\alpha$ is positive, we can apply  Schur's test \cite{FR}.
It suffices to exhibit a positive function $u$  and constants $A_{\alpha, p}$ and $B_{\alpha, p}$ such that
\begin{equation}\label{schur}
T_{\alpha} (u^{p'})(x)  \leq A_{\alpha, p} u^{p'} (x)
\quad\text{and}\quad
T^{*}_{\alpha}  (u^{p})(x)  \leq B_{\alpha, p} u^{p}(x)
\end{equation}
for almost all $x \in G$ .
It then follows that, for all $f\in L^p(G)$,
\begin{equation}\label{schur-constant}
\lnorm T_\alpha f \rnorm_p\le {A_{\alpha, p}^{1/p'}  B_{\alpha, p}^{1 / p}}\lnorm f \rnorm_p\ .
\end{equation}

For $\gamma>0$, take $u_{\gamma} = \absdot^{\gamma-Q}$ and consider the convolution integrals
\begin{equation*}
u_\gamma^{p'}*I_{\alpha}
\quad\text{and}\quad
 (\absdot^{-\alpha} u_\gamma^{p}) * I_{\alpha}
\end{equation*}
involved in the computation of $T_\alpha(u_\gamma^{p'})$ and $T_\alpha^*(u_\gamma^p)$.
We set
\begin{equation}\label{beta-beta'}
\beta'=Q+(\gamma-Q)p'
\quad\text{and}\quad
 \beta=Q-\alpha+(\gamma-Q)p\ ,
\end{equation}
so $u_\gamma^{p'}$ and $\absdot^{-\alpha}u_\gamma^{p}$ have the same homogeneity as $I_{\beta'}$  and $I_\beta$.
As in the proof of \eqref{algebra}, the convolution integrals converge absolutely in $G \setminus\{0\}$ if and only if $0<\beta<Q-\alpha$ and $0<\beta'<Q-\alpha$, that is,
for $\gamma$ such that
\begin{equation}\label{gamma}
\max \lpar \frac{Q}{p}, \frac{\alpha}{p} + \frac{Q}{p'} \rpar  < \gamma <  Q - \frac{\alpha}{p'} \ .
\end{equation}
When this condition is satisfied, $T_\alpha(u_\gamma^{p'})$ and $T_\alpha^*(u_\gamma^p)$ are positive  functions, continuous away from 0, and with the same homogeneity as $I_{\beta'}$ and $I_\beta$.
The pointwise estimates \eqref{schur} follow trivially by homogeneity.

The range \eqref{gamma} for $\gamma$ is nontrivial if and only if  $0 < \alpha < \sfrac{Q}{p}$, and the proof is complete.
 \end{proof}

 Note that if $\alpha \geq \sfrac{Q}{p}$, then $T_\alpha f$ can be infinite everywhere; however, if $\alpha$ is complex and $0 \leq \Re \alpha < \sfrac{Q}{p}$, then $T_\alpha$ is bounded on $L^p(G)$, but it is harder to control the norm.

 \begin{corollary}\label{hardy}
 Let $1 < p < \infty$ and $0 < \alpha < \sfrac{Q}{p}$.
For all $f\in C_c^\infty(G)$,
\begin{equation*}
\lnorm  \absdot^{-\alpha}f\rnorm_p\le \lopnorm T_\alpha \ropnorm_{p,p} \lnorm  L^\sfrac\alpha2 f\rnorm_p\ .
\end{equation*}
\end{corollary}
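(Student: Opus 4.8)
The plan is to read the inequality as the boundedness of $T_\alpha$ applied to the function $g = L^{\sfrac\alpha2} f$. Indeed, if $g = L^{\sfrac\alpha2} f$, then formally $L^{-\sfrac\alpha2} g = f$, so that, by the definition of $T_\alpha$ and \eqref{Lpowers},
\[
T_\alpha g = \absdot^{-\alpha} L^{-\sfrac\alpha2} g = \absdot^{-\alpha} f \ ,
\]
and the asserted inequality $\lnorm \absdot^{-\alpha} f\rnorm_p \le \lopnorm T_\alpha \ropnorm_{p,p} \lnorm L^{\sfrac\alpha2} f\rnorm_p$ becomes exactly the operator estimate $\lnorm T_\alpha g\rnorm_p \le \lopnorm T_\alpha \ropnorm_{p,p} \lnorm g\rnorm_p$ furnished by Theorem \ref{Talpha}. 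To make this rigorous, two things must be checked: that $g = L^{\sfrac\alpha2} f$ lies in $L^p(G)$, so that Theorem \ref{Talpha} applies to it, and that the inversion identity $L^{-\sfrac\alpha2} L^{\sfrac\alpha2} f = f$ holds, with $L^{-\sfrac\alpha2}$ interpreted as convolution with the Riesz potential $I_\alpha$.

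For the membership, I would observe that, for $f \in C^\infty_c(G)$, the function $g = L^{\sfrac\alpha2} f$ is smooth (hence locally bounded), while away from $\operatorname{supp} f$ it is obtained by integrating $f$ against the kernel of $L^{\sfrac\alpha2}$, which is homogeneous of degree $-\alpha - Q$; thus $g(x) = O(\absdot^{-\alpha - Q})$ at infinity. Since $\alpha > 0 > -\sfrac{Q}{p'}$, this decay is $p$-integrable at infinity, and local boundedness controls the behaviour near the origin, so $g \in L^p(G)$. (If one prefers not to argue this, note that when $\lnorm L^{\sfrac\alpha2} f\rnorm_p = \infty$ the inequality is trivial, so there is no loss in assuming $g \in L^p(G)$.) For the inversion, the safest route is the functional calculus of the positive self-adjoint operator $L$: because its spectral measure gives no mass to $\{0\}$, one has $L^{-\sfrac\alpha2} L^{\sfrac\alpha2} f = f$ in $L^2(G)$, and identifying $L^{-\sfrac\alpha2} g$ with the fractional integral $g * I_\alpha$ upgrades this to an identity of functions. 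When $\alpha < 1$ one can instead stay inside the distributional calculus of Section \ref{sec:homo}, writing $L^{\sfrac\alpha2} f = f * I_{-\alpha}$ and combining Proposition \ref{convI} with \eqref{algebra} and \eqref{L^0} to get $(f * I_{-\alpha}) * I_\alpha = f * I_0 = f$.

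The main obstacle is precisely this inversion when $\alpha \ge 1$: Proposition \ref{convI} requires both exponents to exceed $-1$, so the convolution identity $I_{-\alpha} * I_\alpha = I_0$ is not directly available, and one must either invoke the functional calculus or factor out an integer power, writing $L^{\sfrac\alpha2} = L^{k} L^{\sfrac{(\alpha - 2k)}2}$ with $2k > \alpha$ and composing the resulting kernel identities. Once the inversion is secured, the conclusion is immediate: Theorem \ref{Talpha} applied to $g \in L^p(G)$ gives $\lnorm \absdot^{-\alpha} f\rnorm_p = \lnorm T_\alpha g\rnorm_p \le \lopnorm T_\alpha \ropnorm_{p,p} \lnorm g\rnorm_p = \lopnorm T_\alpha \ropnorm_{p,p} \lnorm L^{\sfrac\alpha2} f\rnorm_p$. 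A variant that bypasses the $L^p$-membership of $g$ is duality: since $\lopnorm T^*_\alpha \ropnorm_{p',p'} = \lopnorm T_\alpha \ropnorm_{p,p}$, for $h \in C^\infty_c(G)$ with $\lnorm h\rnorm_{p'} \le 1$ one writes, using the self-adjointness of $L^{-\sfrac\alpha2}$ and \eqref{Tstar}, $\lip \absdot^{-\alpha} f, h\rip = \lip L^{\sfrac\alpha2} f, T^*_\alpha h\rip$, bounds the right-hand side by $\lnorm L^{\sfrac\alpha2} f\rnorm_p \lopnorm T_\alpha \ropnorm_{p,p}$, and takes the supremum over such $h$.
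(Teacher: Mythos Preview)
Your proposal is correct and follows exactly the paper's approach: the paper's entire proof is the sentence ``We apply Theorem \ref{Talpha} to $L^{\sfrac\alpha2}f$ rather than $f$.'' You have simply supplied the technical details (that $L^{\sfrac\alpha2}f \in L^p(G)$ and that $L^{-\sfrac\alpha2}L^{\sfrac\alpha2}f = f$) that the paper leaves to the reader.
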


\begin{proof}
We apply Theorem \ref{Talpha} to $L^{\sfrac\alpha2}f$ rather than $f$.
\end{proof}

For the rest of this section, we assume that $\absdot = \absdot_{0}$ and $T_\alpha$ is defined using $\absdot_{0}$.
The proof of Theorem \ref{Talpha}, and especially \eqref{schur-constant}, shows that, if
\begin{equation}\label{gamma-bounds}
 0<\alpha< \frac{Q}{ p}
 \quad\text{and}\quad \frac{\alpha}{p} + \frac{Q } {p'} < \gamma <  Q -  \frac{\alpha} {p'}\ ,
\end{equation}
then
\begin{equation}\label{gamma->T}
 \lopnorm T_\alpha \ropnorm_{p,p}\le {A_{\alpha,\gamma, p}^{1/p'}  B_{\alpha,\gamma, p}^{1 / p}}\ ,
 \end{equation}
where 
\begin{equation}\label{schur1}
\begin{aligned}
A_{\alpha, \gamma, p}
&=  \sup_{ x' \in S} \lpar \absdot_{0}^{(\gamma-Q) \, p'} * I_{\alpha} \rpar  (x')
=   \sup_{ x' \in S} \lpar \absdot_{0}^{\beta'-Q} * I_{\alpha} \rpar  (x')\ ,
\\ 
B_{\alpha, \gamma, p}
&=  \sup_{x' \in S} \lpar  \absdot_{0}^{(\gamma-Q) \, p - \alpha} * I_{\alpha} \rpar  (x')
= \sup_{x' \in S} \lpar  \absdot_{0}^{\beta-Q} * I_{\alpha} \rpar  (x')\ .
\end{aligned}
\end{equation}
Estimating these quantities is the focus of our next result.
Recall the distribution $\Lambda$ introduced in \eqref{Lambda}.
We also define the operator $T'_0$ by
\begin{equation}\label{T'_0}
T'_0f =\Biglpar \frac d{d\alpha}T_{\alpha} f \Bigrpar \Bigrist{\alpha = 0}=-\log L^{\sfrac12} f- (\log\absdot_0) f =  - f * \Lambda -(\log\absdot_0) f\ .
\end{equation}
We also look for bounds on the operator norm $\lopnorm T_\alpha \ropnorm_{p,p}$, of the form
\begin{equation*}
\lopnorm T_\alpha \ropnorm_{p,p}\le 1+C_p\alpha\ ,
\end{equation*}
for $\alpha$ close to 0.
These bounds will lead to our logarithmic uncertainty inequality for $T'_0$.

\begin{lemma}\label{calC}
Let $0 < \beta < Q $.
If $\alpha$ is small and $0<\alpha<Q-\beta$, then
\begin{equation*}
\sup_{x'\in S}\lpar \absdot_0^{\beta - Q} *I_{\alpha}\rpar   (x')
\le 1+L_\beta\alpha + O(\alpha^2)\ ,
\end{equation*}
where
\begin{equation}\label{L_beta}
L_\beta=\sup_{x'\in S} \lpar  \absdot_0^{\beta - Q}*\Lambda(x')\rpar \ .
\end{equation}
\end{lemma}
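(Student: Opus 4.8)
The plan is to read the value of $\absdot_0^{\beta-Q}*I_\alpha$ at a point $x'\in S$ as the pairing of the distribution $I_\alpha$ against the function $\phi_{x'}(y)=\labs x'y^{-1}\rabs_0^{\beta-Q}$, and then to transport the first-order expansion \eqref{sviluppo di F_alpha} of $I_\alpha$ to this non-test-function. For $0<\alpha<Q-\beta$ the integral
\[
\lpar \absdot_0^{\beta-Q}*I_\alpha\rpar(x')=\frac{1}{\Gamma(\alpha/2)}\int_G \phi_{x'}(y)\,F_\alpha(y)\wrt y
\]
converges absolutely: near $y=0$ the factor $\phi_{x'}$ is bounded, near $y=x'$ it has the integrable singularity $\labs x'y^{-1}\rabs_0^{\beta-Q}$ (here $\beta>0$ is used), and at infinity the product decays like $\labs y\rabs_0^{\alpha+\beta-2Q}$, which is integrable precisely because $\alpha+\beta<Q$. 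My goal is to prove that, uniformly for $x'\in S$,
\[
\lpar \absdot_0^{\beta-Q}*I_\alpha\rpar(x')=1+\alpha\,\lpar \absdot_0^{\beta-Q}*\Lambda\rpar(x')+O(\alpha^2)\ .
\]
Granting this, the lemma follows immediately: $\phi_{x'}(0)=\labs x'\rabs_0^{\beta-Q}=1$ supplies the leading term, while $\lpar \absdot_0^{\beta-Q}*\Lambda\rpar(x')\le L_\beta$ together with $\alpha>0$ and the uniform remainder gives $\sup_{x'\in S}(\cdots)\le 1+L_\beta\alpha+O(\alpha^2)$. (One first notes that $L_\beta$ is finite, since $\absdot_0^{\beta-Q}*\Lambda$ is continuous on the compact sphere $S$ by \eqref{lambda tilde} and dominated convergence.)

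To obtain the expansion I would mimic the derivation of Proposition \ref{lambda 2}, splitting the integral at the \emph{same} radius $a$ used there:
\[
\lpar \absdot_0^{\beta-Q}*I_\alpha\rpar(x')=T_1(\alpha;x')+T_2(\alpha;x')+T_3(\alpha;x')\ ,
\]
where $T_2=\Gamma(\alpha/2)^{-1}\phi_{x'}(0)\int_{\{y:\labs y\rabs_0<a\}}F_\alpha$, and $T_1$, $T_3$ are the integrals of $(\phi_{x'}-\phi_{x'}(0))\,F_\alpha$ over $\{y:\labs y\rabs_0<a\}$ and of $\phi_{x'}\,F_\alpha$ over $\{y:\labs y\rabs_0>a\}$, respectively. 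Since $\phi_{x'}(0)=1$, the term $T_2$ is \emph{independent} of $x'$ and is exactly the quantity treated in Proposition \ref{lambda 2}; the choice of $a$ there kills its linear term, so $T_2=1+O(\alpha^2)$. For $T_1+T_3$ I would insert $\Gamma(\alpha/2)^{-1}=\frac{\alpha}{2}+O(\alpha^2)$ from \eqref{1/Gamma} and $F_\alpha=F_0+\alpha F_0'+O(\alpha^2)$ from \eqref{F_02}, using $F_0=\absdot_0^{-Q}$ from \eqref{normal}; the leading contribution is
\[
\frac{\alpha}{2}\biglpar \int_{\{y:\labs y\rabs_0<a\}}(\phi_{x'}(y)-1)\labs y\rabs_0^{-Q}\wrt y+\int_{\{y:\labs y\rabs_0>a\}}\phi_{x'}(y)\labs y\rabs_0^{-Q}\wrt y\bigrpar=\alpha\,\lip\Lambda,\phi_{x'}\rip\ ,
\]
which equals $\alpha\lpar \absdot_0^{\beta-Q}*\Lambda\rpar(x')$ by \eqref{lambda tilde}. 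The stratified mean value theorem ensures that the first of these integrals converges at the origin, its integrand being $O(\labs y\rabs_0^{1-Q})$.

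The main work, and the only genuine obstacle, is to make every error term $O(\alpha^2)$ \emph{uniformly} in $x'\in S$, even though $\phi_{x'}$ is neither compactly supported nor smooth. For this I would bound $F_\alpha-F_0-\alpha F_0'$ using Proposition \ref{sviluppo del primo ordine} (base point $0$, increment $\alpha$), namely by $C\alpha^2\labs y\rabs_0^{-Q}(1+\log^2\labs y\rabs_0)(1+\labs y\rabs_0^{\alpha})$, and control $F_0'$ through Lemma \ref{stime puntuali per le derivate di I-alpha}; the resulting integrals against $\phi_{x'}$ still converge at infinity precisely because the extra factors $\labs y\rabs_0^{\alpha}(1+\log^2\labs y\rabs_0)$ remain dominated once $\alpha+\beta<Q$. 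Uniformity in $x'$ then rests on two points: $T_2$ carries no $x'$-dependence, and in $T_1,T_3$ the variable $x'$ enters only through $\phi_{x'}(y)=\labs x'y^{-1}\rabs_0^{\beta-Q}$, whose singularity at $y=x'$ is integrable of a fixed order and whose size and decay are bounded uniformly as $x'$ ranges over the compact set $S$, by the smoothness of $\absdot_0$ away from the origin. Assembling these estimates yields the claimed uniform expansion, and taking the supremum over $S$ completes the proof.
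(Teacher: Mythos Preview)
Your proof is correct and follows exactly the paper's approach: extend the Taylor expansion \eqref{sviluppo di F_alpha} of $I_\alpha$ from test functions to the non-test-function $\phi_{x'}(y)=\labs x'y^{-1}\rabs_0^{\beta-Q}$, obtain a uniform-in-$x'$ expansion with leading term $\phi_{x'}(0)=1$ and linear term $\alpha\lip\Lambda,\phi_{x'}\rip$, then take the supremum over $S$. The paper's own proof is a two-line sketch asserting that this extension goes through because the integrals in \eqref{lambda tilde} still converge; you have in effect supplied the details (the $T_1$--$T_2$--$T_3$ split from Proposition~\ref{lambda 2}, the remainder bounds from Proposition~\ref{sviluppo del primo ordine}, and the uniformity argument) that justify that assertion.
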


\begin{proof}
The integrals in \eqref{lambda tilde} converge absolutely when $\phi$ is replaced by $\labs x'{\cdot} \rabs_0^{\beta-Q}$ for any $x'\in S$, and so \eqref{sviluppo di F_alpha} also extends, that is,
\begin{equation*}
\absdot_0^{\beta - Q} *I_{\alpha} (x')
= 1+\alpha \absdot_0^{\beta - Q}*\Lambda(x') + O(\alpha^2)\ ;
\end{equation*}
the $O(\alpha^2)$ term is uniform in $x' \in S$.
Taking the supremum over $x'$ yields the result.
\end{proof}

We repeat that we are now specializing to the particular homogeneous norm $\absdot_{0}$.

\begin{corollary}\label{finalmente}
Let $1 < p <\infty$.
There is a constant $C_p$, such that, for all sufficiently small positive $\alpha$,
\begin{align}\label{finale}
\lopnorm  T_{\alpha} f  \ropnorm_{p,p}
\leq  \lpar 1 + {C_{p}} \,{\alpha}+O(\alpha^2)\rpar \ .
\end{align}
\end{corollary}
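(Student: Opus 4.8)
The plan is to feed a single, fixed value of $\gamma$ into the Schur bound \eqref{gamma->T} and to estimate the two Schur factors $A_{\alpha,\gamma,p}$ and $B_{\alpha,\gamma,p}$ of \eqref{schur1} separately by means of Lemma \ref{calC}. Concretely, I would fix $\gamma$ in the open interval $\lpar \max\lpar \sfrac Qp,\sfrac Q{p'}\rpar ,Q\rpar $, which is nonempty because $p,p'>1$. For such a $\gamma$ the constraints \eqref{gamma-bounds} hold for all sufficiently small positive $\alpha$, so \eqref{gamma->T} gives
\[
\lopnorm T_\alpha \ropnorm_{p,p}\le A_{\alpha,\gamma,p}^{\sfrac1{p'}}\,B_{\alpha,\gamma,p}^{\sfrac1p}\ .
\]
The point of this choice is that the exponents $\beta'=Q+(\gamma-Q)p'$ and $\beta_0=Q+(\gamma-Q)p$ from \eqref{beta-beta'} then both lie in $(0,Q)$.

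For $A_{\alpha,\gamma,p}$ the homogeneity exponent $\beta'-Q$ is independent of $\alpha$, so Lemma \ref{calC} applies directly with $\beta=\beta'$ and yields $A_{\alpha,\gamma,p}\le 1+L_{\beta'}\alpha+O(\alpha^2)$, the remainder being uniform in $x'\in S$.

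The factor $B_{\alpha,\gamma,p}$ is where the real work lies, and I expect it to be the main obstacle: its exponent $\beta-Q=\beta_0-\alpha-Q$ itself depends on $\alpha$, so Lemma \ref{calC} does not apply verbatim. I would return to the expansion \eqref{sviluppo di F_alpha}, namely $\lip I_\alpha,\phi\rip=\phi(0)+\alpha\lip\Lambda,\phi\rip+O(\alpha^2)$, and apply it to the $\alpha$-dependent test function $\phi_\alpha$ built from the kernel $\absdot_0^{\beta_0-Q}\absdot_0^{-\alpha}$. Since every such $\phi_\alpha$ takes the value $\absdot_0^{\beta_0-Q}(x')=1$ at the origin (because $\labs x'\rabs_0=1$), the leading term is still $1$; and since $\absdot_0^{-\alpha}=1+O(\alpha)$ locally, replacing $\phi_\alpha$ by $\phi_0$ in the linear term changes it only by $O(\alpha^2)$. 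Checking that this perturbation, together with the quadratic remainder of \eqref{sviluppo di F_alpha}, is genuinely $O(\alpha^2)$ uniformly in $x'\in S$ is the delicate step; granting it, I obtain $B_{\alpha,\gamma,p}\le 1+L_{\beta_0}\alpha+O(\alpha^2)$, with the linear coefficient evaluated at the limiting exponent $\beta_0$.

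Finally I would combine the two estimates. As $\alpha\to0+$ both factors tend to $1$, so monotonicity of $t\mapsto t^{\sfrac1{p'}}$ and $t\mapsto t^{\sfrac1p}$ together with a Taylor expansion of $A^{\sfrac1{p'}}B^{\sfrac1p}$ gives
\[
\lopnorm T_\alpha \ropnorm_{p,p}\le\lpar 1+L_{\beta'}\alpha\rpar ^{\sfrac1{p'}}\lpar 1+L_{\beta_0}\alpha\rpar ^{\sfrac1p}+O(\alpha^2)=1+C_p\alpha+O(\alpha^2)\ ,
\]
with $C_p=\sfrac{L_{\beta'}}{p'}+\sfrac{L_{\beta_0}}{p}$, which is exactly the asserted bound \eqref{finale}.
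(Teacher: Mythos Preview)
Your approach is essentially the paper's: fix $\gamma$, invoke the Schur bound \eqref{gamma->T}, and estimate each of $A_{\alpha,\gamma,p}$ and $B_{\alpha,\gamma,p}$ to first order in $\alpha$ via Lemma \ref{calC}. The paper is in fact terser than you are---it simply applies Lemma \ref{calC} to $B_{\alpha,\gamma,p}$ with the $\alpha$-dependent $\beta$ of \eqref{beta-beta'} and records the linear coefficient as $L_\beta$ without further comment, leaving implicit the uniformity in $\beta$ that you take the trouble to unpack; your extra care here is warranted and leads to the same constant $C_p=L_{\beta'}/p'+L_{\beta_0}/p$.
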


\begin{proof}
Fix $\gamma$ satisfying $\sfrac{Q} {p'} < \gamma <  Q$.
Then \eqref{gamma-bounds} holds for $\alpha$ in a right neighborhood of 0.
With $\beta$ and $\beta'$ as in \eqref{beta-beta'}, the constants $A_{\alpha, \gamma, p}$ and $B_{\alpha, \gamma, p}$ in \eqref{schur1} are bounded by   $1+L_{\beta'}\alpha+O(\alpha^2)$ and $1+L_\beta\alpha+O(\alpha^2)$.
By \eqref{gamma->T},
\begin{equation*}
\lopnorm T_\alpha \ropnorm_{p,p}\le 1+\lpar \frac{L_{\beta'}}{p'}+\frac{L_\beta}p\rpar \alpha+O(\alpha^2)\ ,
\end{equation*}
as required.
\end{proof}

Note that $\Lambda$ is not a positive distribution, hence the constants $L_\beta$ in \eqref{L_beta} and $C_p$ in \eqref{finale} need not be positive.

\section{A logarithmic uncertainty inequality}\label{sec:log}

In this section, we find logarithmic versions of the Heisenberg uncertainty principle.
We were inspired by the work of W.~Beckner \cite{Beck1, Beck2}, but unlike Beckner, we do not look for sharp constants.
The following statement is Theorem B of the Introduction.

\begin{theorem*}\label{logarithmic}
Let $1 < p < \infty$. Then, for all $f\in C^{\infty}_c(G)$,
\begin{equation}\label{log2}
\int_{G}  (\log \labs x\rabs) \labs f (x) \rabs^{p} \wrt x + \int_{G} \Re \biglpar (\log L^\sfrac12 f)(x) \,\overline{f (x) } \bigrpar  \labs f (x) \rabs^{p - 2} \wrt x
 \ge C\lnorm f \rnorm_p^p\ .
\end{equation}
\end{theorem*}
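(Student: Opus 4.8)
The plan is to differentiate the operator-norm bound of Corollary~\ref{finalmente} at $\alpha = 0$. Throughout I take $\absdot = \absdot_0$; the passage to a general homogeneous norm is harmless, because by \eqref{norm} we have $\log\labs x\rabs = \log\labs x\rabs_0 + O(1)$ with the $O(1)$ term bounded, so replacing $\absdot_0$ by $\absdot$ in the first integral of \eqref{log2} only alters $C$ by a bounded multiple of $\lnorm f\rnorm_p^p$. Fix $f \in C^\infty_c(G)$ and set $\Phi(\alpha) = \lnorm T_\alpha f\rnorm_p^p$. Since $T_0$ is the identity, $\Phi(0) = \lnorm f\rnorm_p^p$, and Corollary~\ref{finalmente} gives $\Phi(\alpha) \le (1 + C_p\alpha + O(\alpha^2))^p\lnorm f\rnorm_p^p = (1 + pC_p\alpha + O(\alpha^2))\lnorm f\rnorm_p^p$ for small positive $\alpha$. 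Subtracting $\Phi(0)$, dividing by $\alpha$, and letting $\alpha \to 0^+$ yields the one-sided estimate $\limsup_{\alpha\to0^+}\alpha^{-1}(\Phi(\alpha)-\Phi(0)) \le pC_p\lnorm f\rnorm_p^p$, so that $\Phi'(0^+) \le pC_p\lnorm f\rnorm_p^p$ once the right derivative is known to exist.

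The second step is to identify $\Phi'(0)$. The key intermediate claim is that the curve $\alpha\mapsto T_\alpha f$ is differentiable in $L^p(G)$ at $\alpha = 0$, with derivative the function $T_0'f$ of \eqref{T'_0}. Granting this, since $p > 1$ the functional $g\mapsto\lnorm g\rnorm_p^p$ is continuously differentiable on $L^p(G)$ (its derivative at $g$ in the direction $h$ is $p\Re\int_G\labs g\rabs^{p-2}\bar g\,h\wrt x$, which is well defined because $\absdot^{p-1}$ maps $L^p$ into $L^{p'}$), so the chain rule gives
\begin{equation*}
\Phi'(0) = p\int_G\labs f\rabs^{p-2}\Re\bigl(\bar f\,T_0'f\bigr)\wrt x = -p\int_G(\log\absdot_0)\labs f\rabs^p\wrt x - p\int_G\labs f\rabs^{p-2}\Re\bigl(\bar f\,\log L^{\sfrac12}f\bigr)\wrt x,
\end{equation*}
using $T_0'f = -\log L^{\sfrac12}f - (\log\absdot_0)f$. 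Combining this with $\Phi'(0)\le pC_p\lnorm f\rnorm_p^p$ and dividing by $-p$ (which reverses the inequality) gives exactly \eqref{log2} with $C = -C_p$, after noting that $\Re(\bar f\log L^{\sfrac12}f) = \Re((\log L^{\sfrac12}f)\bar f)$.

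The main obstacle is the $L^p$-differentiability claim, since the expansions in Section~\ref{sec:homo} (for instance \eqref{sviluppo di F_alpha} and Lemma~\ref{calC}) are stated pointwise or distributionally and must be upgraded to convergence of $\alpha^{-1}(T_\alpha f - f) - T_0'f$ to $0$ in the $L^p$ norm. I would obtain this from a dominated-convergence argument: writing $T_\alpha f = \absdot_0^{-\alpha}(f*I_\alpha)$, the scalar factor $\absdot_0^{-\alpha}$ is handled directly, while for $f*I_\alpha$ I would use the integral representation \eqref{I con s} together with the derivative bounds of Lemma~\ref{stime puntuali per le derivate di I-alpha} and Proposition~\ref{sviluppo del primo ordine} to produce, uniformly for small $\alpha$, an $L^p$ majorant of the difference quotient. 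One must also check that $T_0'f\in L^p(G)$: the term $(\log\absdot_0)f$ lies in $L^p$ because $f\in C^\infty_c(G)$, while $f*\Lambda$ decays like $\absdot_0^{-Q}$ at infinity by \eqref{lambda tilde}, which is $p$-integrable precisely for $p > 1$. A secondary technical point, relevant when $1 < p < 2$, is the non-smoothness of $t\mapsto\labs t\rabs^p$ at the origin; this is absorbed by the observation that $\absdot^{p-1}\in L^{p'}$ keeps the derivative of the norm functional well defined, so no pointwise differentiation at zeros of $f$ or of $T_\alpha f$ is required.
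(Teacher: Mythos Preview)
Your proposal is correct and follows essentially the same route as the paper: differentiate the bound of Corollary~\ref{finalmente} at $\alpha=0$, identify the derivative of $\lnorm T_\alpha f\rnorm_p^p$ via \eqref{T'_0}, and pass to a general homogeneous norm using \eqref{norm}. The only cosmetic difference is that the paper packages the one-sided derivative comparison via the auxiliary function $\Phi_\epsilon(\alpha)=(1+(C_p+\epsilon)\alpha)^p-\lnorm T_\alpha f\rnorm_p^p$ and lets $\epsilon\to0$, whereas you take the $\limsup$ of the difference quotient directly; these are equivalent ways of absorbing the $O(\alpha^2)$ term. Your discussion of the $L^p$-differentiability of $\alpha\mapsto T_\alpha f$ and of the norm functional is in fact more explicit than the paper, which simply asserts that $\lnorm T_\alpha f\rnorm_p^p$ is differentiable at $0$ with the stated derivative.
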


\begin{proof}
First, we consider the particular homogeneous norm $\absdot_{0}$.
Taking $f\in C^{\infty}_c(G)$ with $\lnorm f \rnorm_p=1$ and restricting ourselves  to positive values of $\alpha$ for which \eqref{finale} holds and $1+C_p\alpha>0$, we consider the function
\begin{equation*}
\Phi_\epsilon(\alpha)=(1+(C_p+\epsilon)\alpha)^p-\lnorm T_\alpha f \rnorm_p^p\ ,
\end{equation*}
where $\epsilon\geq 0$.
The expression $\lnorm T_\alpha f \rnorm_p^p$ can be differentiated in $\alpha$ at $0$, and
\begin{equation*}
\frac d{d\alpha}\lpar \lnorm T_\alpha f \rnorm_p^p\rpar \rist{{\alpha=0}}
= p \int_G\Re\lpar T'_0 f(x) \,\overline{f (x)} \rpar  \labs f (x) \rabs^{p - 2} \wrt x\ .
\end{equation*}
Hence $\Phi_\epsilon$ is differentiable at $0$.
Now $\Phi_\epsilon(0)=0$ and $\Phi_\epsilon(\alpha)\ge0$ for all sufficiently small $\alpha$, by \eqref{finale}, and so $\Phi_\epsilon'(0)\ge 0$.
Now we let $\epsilon$ tend to $0$, and deduce that $\Phi_0'(0)\ge 0$.
It follows from \eqref{T'_0} that
\[
\int_{G}  (\log \labs x\rabs) \labs f (x) \rabs^{p} \wrt x + \int_{G} \Re \biglpar (\log L^\sfrac12 f)(x) \,\overline{f (x) } \bigrpar  \labs f (x) \rabs^{p - 2} \wrt x
 \ge -C_{p}\lnorm f \rnorm_p^p\ .
\]

For general homogeneous norms, the results follows using the equivalence \eqref{norm}.
\end{proof}

\begin{remark} When $\lnorm f \rnorm_p=1$, Jensen's inequality coupled with inequality \eqref{log2} yield
\begin{align*}
\frac{1}{r} \log \int_{G}  \labs x\rabs^r \labs f (x) \rabs^{p} \wrt x
 +\int_{G}  \Re \lpar  (\log L^\sfrac12 f)(x) \,\overline{f (x) }\rpar  \labs f (x) \rabs^{p - 2} \wrt x
\ge C\ ,
\end{align*}
for every $r>0$.
If we could find a similar way to move the logarithm outside the second integral, then we would be able to derive a version of Corollary \ref{finalmente} for an arbitrary homogeneous norm.
It is not clear to us whether this is possible.
\end{remark}

\section{Uncertainty inequalities}\label{sec:HPW_C}

In \cite{CRS}, a general Heisenberg--Pauli--Weyl uncertainty inequality was proved, in a broad setting that includes the case of a homogenous sub-Laplacian $L$ on a stratified group $G$.
It follows that
\begin{equation*}
\lnorm f \rnorm_2
\le C_{\beta,\gamma}\biglnorm  \absdot^\beta f\bigrnorm_2^\sfrac\gamma{(\beta+\gamma)}\biglnorm  L^\sfrac\gamma2 f\bigrnorm_2^\sfrac\beta{(\beta+\gamma)}
\end{equation*}
for all positive $\beta$ and $\gamma$ and all $f \in C^\infty_c(G)$.

In this section, we extend this inequality to more general $L^p$-norms.
To do this, we recall a weighted version of H\"older's inequality, then prove a corollary.
Next, we extend the Landau--Kolmogorov inequality, and finally we prove another corollary.
These results actually hold in more general contexts than stratified groups.

\begin{lemma}\label{lem:weights}
Suppose that $\absdot$ is a homogeneous norm on $G$, that $0 \leq \theta \leq 1$ and $\alpha \geq 0$.
If $1 \leq p, q, r \leq \infty$, and
\[
\frac{1}{p} = \frac{\theta}{q} + \frac{1-\theta}{r} \ ,
\]
then
\[
\lnorm \absdot^{\alpha} f  \rnorm_p
\leq \biglnorm  \absdot^{\sfrac\alpha\theta} f \bigrnorm_q^\theta \, \biglnorm f \bigrnorm_r^{1-\theta}
\]
for all $f \in C^\infty_c(G)$.
\end{lemma}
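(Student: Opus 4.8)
The plan is to deduce this from a single application of H\"older's inequality after a pointwise factorization of the integrand; no structure of $G$ beyond its measure is used, and the resulting constant is exactly $1$. I shall assume $0 < \theta < 1$: the case $\theta = 1$ is the identity $\lnorm \absdot^\alpha f \rnorm_p = \lnorm \absdot^\alpha f \rnorm_q$ (here $p = q$), and the endpoint $\theta = 0$ is degenerate.

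The key is the pointwise identity
\[
\absdot^{\alpha} \labs f \rabs
= \bigl( \absdot^{\sfrac{\alpha}{\theta}} \labs f \rabs \bigr)^{\theta} \, \labs f \rabs^{1-\theta}
\]
on $G \setminus \{0\}$, verified by adding exponents: the power of $\absdot$ is $\theta \cdot \sfrac{\alpha}{\theta} = \alpha$ and the power of $\labs f \rabs$ is $\theta + (1-\theta) = 1$. This separates $\absdot^\alpha \labs f \rabs$ into one factor built from $\absdot^{\sfrac{\alpha}{\theta}} f$ and one built from $f$ alone, exactly matching the two norms on the right.

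I would then apply H\"older's inequality to this product with exponents $a = \sfrac{q}{\theta}$ and $b = \sfrac{r}{(1-\theta)}$. The hypothesis $\sfrac{1}{p} = \sfrac{\theta}{q} + \sfrac{(1-\theta)}{r}$ is precisely $\sfrac{1}{a} + \sfrac{1}{b} = \sfrac{1}{p}$, so H\"older yields
\[
\lnorm \absdot^\alpha f \rnorm_p
\le \Biglnorm \bigl( \absdot^{\sfrac{\alpha}{\theta}} \labs f \rabs \bigr)^{\theta} \Bigrnorm_{\sfrac{q}{\theta}}
\, \Biglnorm \labs f \rabs^{1-\theta} \Bigrnorm_{\sfrac{r}{(1-\theta)}}.
\]
Finally I would collapse each factor with the elementary rule $\lnorm \labs g \rabs^{s} \rnorm_{u} = \lnorm g \rnorm_{su}^{s}$: the first factor becomes $\lnorm \absdot^{\sfrac{\alpha}{\theta}} f \rnorm_q^{\theta}$ and the second becomes $\lnorm f \rnorm_r^{1-\theta}$, which is the claimed inequality.

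There is really no obstacle beyond spotting the correct split and reading off the H\"older exponents, and the latter is forced by the index relation. The only point deserving a word is the case of infinite exponents among $p, q, r$, permitted by the statement; these are handled by the usual conventions for H\"older's inequality together with $\lnorm \labs g \rabs^{s} \rnorm_\infty = \lnorm g \rnorm_\infty^{s}$.
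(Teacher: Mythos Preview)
Your proof is correct and is essentially the same argument as the paper's: the paper also splits $\absdot^\alpha \labs f\rabs$ pointwise as $(\absdot^\alpha \labs f\rabs^\theta)(\labs f\rabs^{1-\theta})$, raises to the $p$th power, and applies H\"older with an index $s$ chosen so that $q=\theta p s$ and $r=(1-\theta)ps'$. Your version applies the generalized H\"older inequality $\lnorm FG\rnorm_p\le\lnorm F\rnorm_a\lnorm G\rnorm_b$ directly with $a=q/\theta$, $b=r/(1-\theta)$, which is the same step written slightly more cleanly.
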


\begin{proof}
Write
\begin{equation*}
\lpar \absdot^\alpha \labs f  \rabs \rpar^p=  \lpar \absdot^{\alpha}  \labs f  \rabs^{\theta} \rpar^p  \lpar \labs f  \rabs^{1-\theta} \rpar^p\ ,
\end{equation*}
then apply H\"older's inequality with index $s$ and take $p$th roots to get
\begin{equation*}
\lpar \int_G \lpar \labs x\rabs^\alpha \labs f(x) \rabs \rpar^p \wrt x\rpar ^{\sfrac1p}
\leq  \lpar \int_G  \lpar \labs x\rabs^{\alpha}  \labs f(x) \rabs^{\theta} \rpar^{ps} \wrt x\rpar^{\sfrac1{ps}}
		\lpar \int_G  \lpar \labs f(x) \rabs^{1-\theta}  \rpar^{ps'} \wrt x\rpar^{\sfrac1{ps'}}  \ .
\end{equation*}
If $s$ is chosen so that $q = \theta p s$, then $r = (1-\theta) ps'$, and we are done.
\end{proof}

Of course, it is routine to extend this result to more general measurable functions.

\begin{corollary}\label{HPW}
Suppose that $\beta>0$, $\gamma>0$, $p>1$, $q\geq 1$,  $r > 1$, and that
\begin{equation*}
\gamma <\frac Qr
\quad\text{and}\quad
\frac{\beta+\gamma}p = \frac\gamma q+\frac\beta r \ .
\end{equation*}
Then
\begin{equation}\label{HPWeq}
\lnorm f \rnorm_p
\le  C \biglnorm \absdot^\beta f\bigrnorm_q^\sfrac\gamma{(\beta+\gamma)} \lnorm  L^\sfrac\gamma2 f\rnorm_r^\sfrac\beta{(\beta+\gamma)}
\end{equation}
for all $f \in C^\infty_c(G)$.
\end{corollary}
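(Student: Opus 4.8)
The plan is to combine the Hardy-type inequality from Corollary~\ref{hardy} with the weighted H\"older inequality of Lemma~\ref{lem:weights}. The role of Corollary~\ref{hardy} is to convert the norm $\lnorm L^{\sfrac\gamma2} f\rnorm_r$ on the right of \eqref{HPWeq} into a weighted norm of $f$ itself, which can then be interpolated against $\biglnorm \absdot^\beta f\bigrnorm_q$ using Lemma~\ref{lem:weights}. First I would observe that the hypotheses $\gamma < \sfrac Qr$ and $r>1$ are exactly what is needed to apply Corollary~\ref{hardy} with $p$ there replaced by $r$ and $\alpha$ there replaced by $\gamma$: this gives
\begin{equation*}
\lnorm \absdot^{-\gamma} f \rnorm_r \le \lopnorm T_\gamma \ropnorm_{r,r}\, \lnorm L^{\sfrac\gamma2} f \rnorm_r\ .
\end{equation*}
Thus, up to a constant, $\lnorm L^{\sfrac\gamma2} f\rnorm_r$ controls $\lnorm \absdot^{-\gamma} f\rnorm_r$ from above, so it suffices to prove \eqref{HPWeq} with the factor $\lnorm L^{\sfrac\gamma2} f\rnorm_r$ replaced by $\lnorm \absdot^{-\gamma} f\rnorm_r$.

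Next I would reduce \eqref{HPWeq} (with this replacement) to an instance of Lemma~\ref{lem:weights}. The idea is to write $\lnorm f\rnorm_p$ as the $p$-norm of $\absdot^0 f$ and split the exponent $0$ as a convex combination of the weight exponent $\beta$ and the weight exponent $-\gamma$. Concretely, I would set the interpolation parameter $\theta$ so that $0 = \theta\,\beta/\theta\cdot(\text{positive part}) - \cdots$; more usefully, I choose $\theta \in [0,1]$ with
\begin{equation*}
\theta = \frac{\gamma}{\beta+\gamma}\ , \qquad 1-\theta = \frac{\beta}{\beta+\gamma}\ ,
\end{equation*}
so that $\theta\beta - (1-\theta)\gamma = 0$, i.e. the two weights balance to the trivial weight $\absdot^0$. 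Applying a three-term weighted H\"older inequality of the type in Lemma~\ref{lem:weights}, but splitting $|f| = (\absdot^\beta |f|)^\theta (\absdot^{-\gamma}|f|)^{1-\theta}$ rather than peeling off an unweighted factor, yields
\begin{equation*}
\lnorm f\rnorm_p \le \biglnorm \absdot^\beta f\bigrnorm_q^{\theta}\,\biglnorm \absdot^{-\gamma} f\bigrnorm_r^{1-\theta}\ ,
\end{equation*}
provided the indices satisfy $\sfrac1p = \sfrac\theta q + \sfrac{(1-\theta)}r$. Substituting $\theta = \gamma/(\beta+\gamma)$ turns this index condition into precisely the relation $\sfrac{(\beta+\gamma)}p = \sfrac\gamma q + \sfrac\beta r$ assumed in the hypothesis, and the exponents $\theta$ and $1-\theta$ match those in \eqref{HPWeq}. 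Chaining this with the Hardy bound from the previous paragraph gives \eqref{HPWeq} with $C = \lopnorm T_\gamma\ropnorm_{r,r}^{\beta/(\beta+\gamma)}$.

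The main obstacle is to make the interpolation step clean: Lemma~\ref{lem:weights} as stated peels off an \emph{unweighted} factor $\lnorm f\rnorm_r^{1-\theta}$, whereas here I need to interpolate between two nontrivially weighted norms, $\biglnorm\absdot^\beta f\bigrnorm_q$ and $\biglnorm\absdot^{-\gamma} f\bigrnorm_r$. I would therefore either restate the weighted H\"older inequality in the symmetric two-weight form $\lnorm \absdot^{\theta a + (1-\theta) b} f\rnorm_p \le \biglnorm\absdot^{a} f\bigrnorm_q^\theta \biglnorm\absdot^{b} f\bigrnorm_r^{1-\theta}$ (which follows by the same one-line H\"older argument, writing $\absdot^{\theta a+(1-\theta)b}|f| = (\absdot^a|f|)^\theta(\absdot^b|f|)^{1-\theta}$), or apply the stated Lemma~\ref{lem:weights} after the substitution $g = \absdot^{-\gamma} f$. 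The remaining verifications — that the index arithmetic closes, that the exponent $\theta\beta - (1-\theta)\gamma$ really is $0$, and that all integrals involved are finite for $f \in C^\infty_c(G)$ — are routine. Finally, the positivity constraints $\beta,\gamma>0$, $q\ge1$, $r>1$ guarantee $\theta\in(0,1)$ and that the H\"older exponents are admissible.
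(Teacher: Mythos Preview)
Your proposal is correct and follows essentially the same approach as the paper: first apply the two-weight H\"older inequality (writing $|f| = (\absdot^\beta |f|)^\theta(\absdot^{-\gamma}|f|)^{1-\theta}$ with $\theta = \gamma/(\beta+\gamma)$) to obtain $\lnorm f\rnorm_p \le \biglnorm\absdot^\beta f\bigrnorm_q^\theta \biglnorm\absdot^{-\gamma} f\bigrnorm_r^{1-\theta}$, then invoke Corollary~\ref{hardy} to replace $\biglnorm\absdot^{-\gamma} f\bigrnorm_r$ by $C\lnorm L^{\sfrac\gamma2} f\rnorm_r$. Your observation that Lemma~\ref{lem:weights} must be used in its symmetric two-weight form (or after the substitution $g = \absdot^{-\gamma} f$) is exactly right; the paper handles this by citing ``H\"older's inequality, as in the proof of Lemma~\ref{lem:weights}'' rather than the lemma itself.
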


\begin{proof}
Use H\"older's inequality, as in the proof of Lemma \ref{lem:weights}, Corollary \ref{hardy} with the estimate \eqref{finale}:
\[
\begin{aligned}
\lnorm f \rnorm_p
&\le  \biglnorm \absdot^\beta f\bigrnorm_q^\sfrac\gamma{(\beta+\gamma)}\lnorm  \absdot^{-\gamma}  f\rnorm_r^\sfrac\beta{(\beta+\gamma)} \\
&\le C    \biglnorm \absdot^\beta f\bigrnorm_q^\sfrac\gamma{(\beta+\gamma)}\lnorm  L^\sfrac\gamma2 f\rnorm_r^\sfrac\beta{(\beta+\gamma)}\ ,
\end{aligned}
\]
as required.
\end{proof}

If we use the homogeneous norm $\absdot_{0}$, then we can show that the constant $C$ in \eqref{HPW} may be taken to be $1+C_r \min\{\beta,\gamma\} + O\lpar(\min\{\beta,\gamma\}^2 \rpar $, with $C_r$ as in \eqref{finale}.
Since we can control the constants, we can differentiate the inequality of Corollary \ref{HPW} to obtain a logarithmic version.
At the cost of losing control of the constants, and hence the possibility of differentiating,  we can treat higher powers of the sub-Laplacian
 and obtain Theorem~D of the Introduction.

\begin{theorem*}\label{KL}
Suppose that $L$ is a sub-Laplacian on $G$, that $0 \leq \theta \leq 1$, and that $\alpha \geq 0$.
If $1 < p, q, r < \infty$, and
\[
\frac{1}{p} = \frac{\theta}{q} + \frac{1-\theta}{r} \ ,
\]
then
\[
\lnorm L^{\sfrac{\alpha}{2}} f  \rnorm_p \leq C \lnorm  L^{\sfrac{\alpha}{2\theta}} f \rnorm_q^\theta \lnorm f \rnorm_r^{1-\theta}
\]
for all $f \in C^\infty_c(G)$.
\end{theorem*}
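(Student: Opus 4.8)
The plan is to prove this by complex interpolation of an analytic family of complex powers of $L$, mimicking the classical proof of the Landau--Kolmogorov inequality on $\RR^n$. First I would dispose of the degenerate cases: the statement is trivial when $\alpha = 0$, and when $\theta = 1$ it reduces to $p = q$ and holds with $C = 1$ (while $\theta = 0$ is meaningful only if $\alpha = 0$). So assume $0 < \theta < 1$ and $\alpha > 0$, fix $f \in C^\infty_c(G)$, and consider on the closed strip $\{0 \le \Re z \le 1\}$ the family
\[
G(z) = L^{\alpha z/(2\theta)} f ,
\]
defined through the functional calculus of $L$ (consistent with \eqref{Lpowers}). Then $G(\theta) = L^{\alpha/2} f$ is exactly the function whose $L^p$ norm is to be estimated, whereas on the edges $G(iy) = L^{i\alpha y/(2\theta)} f$ and $G(1+iy) = L^{i\alpha y/(2\theta)}\bigl(L^{\alpha/(2\theta)} f\bigr)$.

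The essential analytic input is that the imaginary powers of $L$ are bounded on every $L^\sigma(G)$ with $1 < \sigma < \infty$, and grow only polynomially in the exponent: there are constants $C_\sigma$ and $N_\sigma$ with $\|L^{it}\|_{\sigma \to \sigma} \le C_\sigma (1 + |t|)^{N_\sigma}$ for all $t \in \RR$. This follows from the Mihlin--H\"ormander multiplier theorem for sub-Laplacians on stratified groups, since the multiplier $\lambda \mapsto \lambda^{it}$ has Mihlin norms of size $(1+|t|)^{N}$. Granting this, the boundary estimates are $\|G(iy)\|_r \le C_r (1+|y|)^{N_r} \|f\|_r$ and $\|G(1+iy)\|_q \le C_q (1+|y|)^{N_q} \|L^{\alpha/(2\theta)} f\|_q$. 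One must also check that $z \mapsto G(z)$ is a well-defined analytic function, valued in measurable functions and of admissible growth on the strip: for $\Re z \in (0,1]$ I would write $G(z) = L^{i\alpha \Im z/(2\theta)}\bigl(L^{\alpha \Re z/(2\theta)} f\bigr)$, where the inner nonnegative power sends $C^\infty_c(G)$ into every $L^\sigma(G)$, and deduce analyticity from the integral representations and dominated convergence, as in the holomorphy arguments of Section~\ref{sec:homo}.

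With these two boundary estimates in hand, I would apply the complex interpolation (three lines) theorem to the single analytic function $G$, using the index identity $\frac1p = \frac{1-\theta}{r} + \frac\theta q$, equivalently $[L^r, L^q]_\theta = L^p$. Because the boundary bounds grow in $|y|$, a crude supremum over the edges is infinite, so I would use the Poisson-kernel form of the three lines theorem (equivalently Stein's interpolation theorem, whose admissibility condition tolerates growth up to $e^{a|y|}$ with $a < \pi$): pairing $G(z)$ against an analytic family of simple functions normalised in $L^{p'}$ in the standard Riesz--Thorin manner, the polynomial factors $(1+|y|)^{N}$ are integrated against the Poisson kernel of the strip and contribute only a finite constant. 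This produces $\|G(\theta)\|_p \le C \|f\|_r^{1-\theta} \|L^{\alpha/(2\theta)} f\|_q^\theta$, which is precisely the asserted inequality.

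The main obstacle is the boundedness of the imaginary powers with admissible growth, which is the one genuinely external ingredient; it is also where the hypothesis $1 < p, q, r < \infty$ is forced, since imaginary powers behave like singular integrals and fail to be bounded at the endpoints $\sigma = 1$ and $\sigma = \infty$. The secondary technical point is to verify rigorously that $G$ is an admissible analytic family --- that the nonnegative and imaginary powers act on $C^\infty_c(G)$ to yield genuine $L^\sigma$ functions depending holomorphically on $z$ with the stated polynomial growth --- which I would handle with the functional calculus and the analytic-continuation machinery already developed for the Riesz potentials. Once these are established, the interpolation step is routine.
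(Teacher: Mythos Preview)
Your proposal is correct and follows essentially the same route as the paper: Stein's complex interpolation applied to the analytic family $z\mapsto L^{z}f$, with the boundedness of imaginary powers supplied by the Mihlin--H\"ormander theorem, and the conclusion obtained by pairing against an analytic family of simple test functions. The only cosmetic differences are that the paper parametrises the strip as $\{0\le\Re z\le \alpha/\theta\}$ rather than $\{0\le\Re z\le 1\}$, states the bound on $\lopnorm L^{iy}\ropnorm_{s,s}$ as $e^{\gamma|y|}$ (a cruder but still admissible upper bound than your polynomial one), and accordingly inserts an explicit $e^{z^2}$ damping factor in place of invoking the Poisson-kernel form of the three lines theorem.
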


\begin{proof}
We use the arguments of complex interpolation (see \cite{BL, Stein-interp}).
To do this, we need to know that the operators $L^{iy}$, where $y \in \RR$, are bounded on the Lebesgue spaces $L^s(G)$ when $1 < s < \infty$, and that $\lopnorm L^{iy/2} \ropnorm_{s,s} \leq C(s) \fn \phi(y)$, where $\phi(y) = e^{ \gamma\labs y\rabs }$.
For stratified groups, this follows from the Mihlin--H\"ormander multiplier theorem (see, for instance, \cite{Christ, MM}). In more general cases, such estimates follow from versions of this theorem for semigroups, see \cite{Carb-Drag, Cow, Stein-LPS}.

We write $S$ for the strip
\[
S = \Bigl\{ z \in \CC : 0 \leq \Re z \leq \frac{\alpha}{\theta} \Bigr\} \ .
\]

Take an arbitrary compactly supported simple function $g \in L^{p'}(G)$, and for $z$ in $S$, define $g_z:G \to \CC$ by
\[
g_z (x) = \lnorm g\rnorm_{p'} ^{az + b} g(x) \labs g(x) \rabs ^{cz +d}
\]
for all $x \in G$, where
\[
a = \frac{\theta p'}{\alpha}\lpar \frac{1}{q} - \frac{1}{p} \rpar, \quad b = -\frac{p'}{r'}, \quad c = \frac{\theta p'}{\alpha} \lpar \frac{1}{p} - \frac{1}{q} \rpar \quad\text{and}\quad d = \frac{p'}{r'} - 1 \ .
\]

When $\Re z = \eta$ and
\[
\frac{1}{s} = \frac{1}{r} - \frac{\theta \eta}{\alpha} \lpar \frac{1}{p} - \frac{1}{q} \rpar  ,
\]
it follows readily that
\[
\begin{aligned}
\lnorm g_z \rnorm_{s'}^{s'}
&=  \int_G \labs g_z(x) \rabs^{s'}  \wrt x
= \lnorm g\rnorm_{p'} ^{{s'}(a\eta + b) + p' }
= 1 \ .
\end{aligned}
\]

Now we fix $f \in C^\infty_c(G)$ and consider the analytic function $h : S \to \CC$, defined by
\[
h(z) = e^{z^2} \int_G L^\sfrac{z}{2} f (x) \, g_z(x) \, \wrt x \ .
\]

By our assumptions on $f$, for each $z \in S$, the function $L^\sfrac{z}{2}f$ on $G$ is smooth, while $g_z$ is a simple function with compact support, and so $h(z)$ is defined; moreover, if $z = \eta + iy$, then
\[
\begin{aligned}
\labs h(z) \rabs
&\leq e^{\eta^2 - y^2} \lnorm L^\sfrac{z}{2} f \rnorm_{s}  \lnorm g_z \rnorm_{s'}
= e^{\eta^2 - y^2}\lnorm L^\sfrac{iy}{2} L^{\sfrac{\eta}{2}} f \rnorm_{s}
\leq C e^{- y^2}\phi(y) \lnorm L^{\sfrac{\eta}{2}} f \rnorm_{s} \ ,
\end{aligned}
\]
whence $\labs h(\eta+iy) \rabs \leq C \lnorm L^{\sfrac{\eta}{2}} f \rnorm_{s}$.
Further, if $\Re z = 0$, then $\labs h(z) \rabs \leq C \lnorm f \rnorm_r$, while if $\Re z =  \sfrac{\alpha}{\theta}$, then $\labs h(z) \rabs \leq C \lnorm L^{\sfrac{\alpha}{2\theta}} f \rnorm_q$.
By the Phragm\'en--Lindel\"of 
theorem, it follows that
\[
\begin{aligned}
\labs h(\alpha) \rabs
&\leq C \lnorm L^{\sfrac{\alpha}{2\theta}} f \rnorm_q ^{\theta} \lnorm  f \rnorm_r ^{1-\theta} \ .
\end{aligned}
\]

Since
\[
h(\alpha) = \int_G L^\sfrac{\alpha}{2} f (x) \, g_\alpha(x) \, \wrt x \ ,
\]
and $g_\alpha$ is an arbitrary simple function on $G$ with compact support and $L^{p'}(G)$-norm equal to $1$, the desired estimate follows.
\end{proof}

It is more complicated to extend this inequality to more general functions, and we leave this to the interested reader.
See \cite{BL, SW} for more on the necessary technology.
Now we apply these interpolation results to extend our uncertainty inequalities to a wider range of Lebesgue indices and powers of the homogenous norm and sub-Laplacian, to obtain the following statement, which is Theorem C of the Introduction.

\begin{theorem*}\label{HPW2}
Suppose that $\beta>0$, $\delta>0$, $p>1$, $s\geq 1$,  $r > 1$, and that
\begin{equation*}
\frac{\beta+\delta}p = \frac\delta s+\frac\beta r \ .
\end{equation*}
Then
\begin{equation*}
\lnorm f \rnorm_p
\le C  \biglnorm \absdot_0^\beta f\bigrnorm_s^\sfrac\delta{(\beta+\delta)}\lnorm  L^\sfrac\delta2 f\rnorm_r^\sfrac\beta{(\beta+\delta)}
\end{equation*}
for all $f \in C^\infty_c(G)$.
\end{theorem*}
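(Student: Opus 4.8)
The plan is to reduce the possibly large power $\delta$ to the small-power regime already covered by Corollary \ref{HPW}, using the extended Landau--Kolmogorov inequality (Theorem D) to trade the factor $L^\sfrac\delta2$ for a small power $L^\sfrac\gamma2$, at the price of an extra factor $\lnorm f\rnorm_p$ which can then be absorbed into the left-hand side.

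First I would fix the target indices $\beta,\delta,p,s,r$ satisfying \eqref{pqr} and introduce an auxiliary parameter $\gamma$ with $0<\gamma<\delta$, to be taken small. I define the index $r_1$ by the balance relation demanded by Corollary \ref{HPW}, that is,
\[
\frac\beta{r_1}=\frac{\beta+\gamma}p-\frac\gamma s\ .
\]
Since the right-hand side tends to $\sfrac\beta p$ as $\gamma\to0^+$, we have $r_1\to p$, so for all sufficiently small $\gamma$ the requirements $1<r_1<\infty$ and $\gamma<\sfrac Q{r_1}$ of Corollary \ref{HPW} are met. Applying that corollary with the norm $\absdot_0$ and parameters $\beta,\gamma,p,s,r_1$ then gives
\[
\lnorm f\rnorm_p\le C\biglnorm\absdot_0^\beta f\bigrnorm_s^{\sfrac\gamma{(\beta+\gamma)}}\lnorm L^\sfrac\gamma2 f\rnorm_{r_1}^{\sfrac\beta{(\beta+\gamma)}}\ .
\]

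Next I would apply Theorem D with $\theta=\sfrac\gamma\delta$ and $\alpha=\gamma$, so that $L^{\sfrac\alpha{(2\theta)}}=L^\sfrac\delta2$ and $0<\theta<1$. A short computation from \eqref{pqr} and the definition of $r_1$ shows that the index identity $\frac1{r_1}=\frac\theta r+\frac{1-\theta}p$ required by Theorem D holds, with all three exponents in $(1,\infty)$; hence
\[
\lnorm L^\sfrac\gamma2 f\rnorm_{r_1}\le C\lnorm L^\sfrac\delta2 f\rnorm_r^{\sfrac\gamma\delta}\lnorm f\rnorm_p^{1-\sfrac\gamma\delta}\ .
\]
Substituting this bound into the previous inequality produces
\[
\lnorm f\rnorm_p\le C\biglnorm\absdot_0^\beta f\bigrnorm_s^{\sfrac\gamma{(\beta+\gamma)}}\lnorm L^\sfrac\delta2 f\rnorm_r^{\frac\gamma\delta\cdot\frac\beta{\beta+\gamma}}\lnorm f\rnorm_p^{E}\ ,\qquad E=\Bigl(1-\frac\gamma\delta\Bigr)\frac\beta{\beta+\gamma}\ .
\]

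Finally I would absorb the trailing factor. Since $0<\gamma<\delta$ and $\beta>0$, the exponent $E$ lies strictly between $0$ and $1$, and since $f\in C^\infty_c(G)$ the norm $\lnorm f\rnorm_p$ is finite; thus, the case $f=0$ being trivial, I may divide through by $\lnorm f\rnorm_p^{E}$. A direct calculation gives $1-E=\frac{\gamma(\beta+\delta)}{\delta(\beta+\gamma)}$, and dividing the two surviving exponents by $1-E$ turns $\sfrac\gamma{(\beta+\gamma)}$ into $\sfrac\delta{(\beta+\delta)}$ and $\frac\gamma\delta\cdot\frac\beta{\beta+\gamma}$ into $\sfrac\beta{(\beta+\delta)}$, which is exactly the asserted inequality \eqref{HPWeq2}. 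The step I expect to be the main obstacle is making the three sets of index constraints---those of Corollary \ref{HPW}, of Theorem D, and of the target---hold simultaneously; the crucial point is that, by the scaling identity \eqref{pqr}, taking the low-order Lebesgue exponent in Theorem D to equal $p$ renders all the relations consistent, and the same identity guarantees $E<1$, so that the absorption is justified.
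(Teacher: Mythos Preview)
Your proposal is correct and follows the same strategy as the paper: apply Corollary \ref{HPW} with a small auxiliary power $\gamma$, upgrade $L^{\gamma/2}$ to $L^{\delta/2}$ via the Landau--Kolmogorov inequality (Theorem D), and absorb the resulting $\lnorm f\rnorm_p$ factor. The only cosmetic difference is that the paper keeps the Laplacian exponent fixed in the Corollary~\ref{HPW} step (introducing an auxiliary weighted-norm exponent), whereas you keep the weighted-norm exponent fixed at $s$ and float the Laplacian exponent $r_1$; your bookkeeping is in fact the tidier of the two.
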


\begin{proof}
If $\delta < \sfrac Qr$, there is nothing to do.
Otherwise, we begin with the estimate from Corollary \ref{HPW}, and apply our version of the Landau--Kolmogorov inequality (Theorem D).
Take $\theta$ between $0$ and $\sfrac Q{(r\delta)})$, and let $\gamma = \theta\delta $.
Corollary \ref{HPW} shows that
\begin{equation*}
\lnorm f \rnorm_p
\le C  \biglnorm \absdot_0^\beta f\bigrnorm_q^\sfrac\gamma{(\beta+\gamma)}\lnorm  L^\sfrac\gamma2 f\rnorm_r^\sfrac\beta{(\beta+\gamma)}
\end{equation*}
for all $f \in C^\infty_c(G)$.
By Theorem \ref{KL}, with $r$, $s$ and $p$ in place of $p$, $q$, and $r$,
\[
\lnorm L^{\sfrac\gamma 2} f \rnorm_r \leq C \lnorm L^{\sfrac\delta 2}  f \rnorm_s^\theta \lnorm f \rnorm_p ^{1-\theta} \ ,
\]
and so
\begin{equation*}
\lnorm f \rnorm_p
\le C  \biglnorm \absdot_0^\beta f\bigrnorm_q^\sfrac\gamma{(\beta+\gamma)} \lnorm L^{\sfrac\delta 2}  f \rnorm_s^{\sfrac{\theta\beta}{(\beta+\gamma)}} \lnorm f \rnorm_p^{\sfrac{((1-\theta)\beta}{(\beta+\gamma)}} \ ,
\end{equation*}
and reorganizing this expression, we obtain the desired estimate.
\end{proof}

\section{An alternative approach to uncertainty inequalities}\label{sec:HPW_R}

We can use a real interpolation argument to give a simpler proof of an uncertainty inequality involving different Lebesgue indices; however the range of powers of the homogenous norm and of the sub-Laplacian that appear is limited.  Of course, we can always extend these using the arguments used in the previous section.

\begin{theorem}
Suppose that $1 < p,q, r< \infty$, that $0 < \alpha < Q/q'$, and that $0< \beta < Q/r$, and take $s$ and $t$ such that
\[
\frac{1}{s} =  \frac{1}{q} + \frac{ \alpha}{Q}
\quad\text{and}\quad
\frac{1}{t}  =  \frac{1}{r} - \frac{ \beta}{Q}  \ .
\]

Suppose also that either
\[
s \neq t
\quad\text{and}\quad
\frac{1}{p} = \frac{\theta}{s} + \frac{1-\theta}{t}  \ ,
\]
or
\[
s = t
\quad\text{and}\quad
\theta \leq \theta_0 = \frac{\beta}{\alpha + \beta}  \ .
\]

If $f \in C^\infty_c(G)$, then
\[
\lnorm f \rnorm_p \leq C \lnorm \absdot^\alpha f \rnorm_q^\theta  \lnorm L^{\beta/2} f \rnorm_r^{1-\theta} \ .
\]
\end{theorem}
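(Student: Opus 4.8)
The plan is to factor the desired inequality through two elementary mapping properties---one for multiplication by the weight $\absdot^{-\alpha}$, and one for the Riesz potential $I_\beta$ (the kernel of $L^{-\sfrac\beta2}$)---and then to combine them by real interpolation in the Lorentz scale. Writing $A = \lnorm \absdot^\alpha f\rnorm_q$ and $B = \lnorm L^{\sfrac\beta2} f\rnorm_r$, the point is that the single function $f$ admits two different representations, $f = \absdot^{-\alpha}\,(\absdot^\alpha f)$ and $f = (L^{\sfrac\beta2}f) * I_\beta$, giving two a priori bounds on its size that one then plays against one another. The hypotheses $0<\alpha<\sfrac Q{q'}$ and $0<\beta<\sfrac Qr$ are exactly what make the auxiliary indices $s$ and $t$ lie in $(1,\infty)$.

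First I would record the two building-block estimates. Since Haar measure scales as $|\delta_\rho E| = \rho^Q|E|$, the function $\absdot^{-\alpha}$ lies in the weak space $L^{\sfrac Q\alpha,\infty}(G)$, while the kernel $I_\beta$, being homogeneous of degree $\beta - Q$ and smooth off the origin, lies in $L^{\sfrac Q{(Q-\beta)},\infty}(G)$. H\"older's inequality for Lorentz spaces and O'Neil's convolution inequality then yield
\[
\lnorm f\rnorm_{s,q} \le C \lnorm \absdot^\alpha f\rnorm_q, \qquad \lnorm f\rnorm_{t,r} \le C\lnorm L^{\sfrac\beta2}f\rnorm_r,
\]
with $\tfrac1s = \tfrac1q + \tfrac\alpha Q$ and $\tfrac1t = \tfrac1r - \tfrac\beta Q$. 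In particular, after embedding into the corresponding weak spaces, $\tau^{1/s}f^*(\tau)\le CA$ and $\tau^{1/t}f^*(\tau)\le CB$ for the decreasing rearrangement $f^*$.

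When $s \ne t$, the relation $\tfrac1p = \tfrac\theta s + \tfrac{1-\theta}t$ places $p$ strictly between $s$ and $t$, and I would finish with the classical distribution-function argument: from $f^*(\tau) \le C\min(A\,\tau^{-1/s}, B\,\tau^{-1/t})$, splitting $\int_0^\infty f^*(\tau)^p\wrt\tau$ at the crossover level and optimising gives $\lnorm f\rnorm_p \le C A^\theta B^{1-\theta}$, as required. The hard part is the diagonal case $s = t$, where necessarily $p=s=t$ and the two weak bounds only place $f$ in $L^{p,\infty}$, strictly larger than $L^p$; no splitting can recover the strong norm. Here I would exploit the \emph{finer} second indices: since one checks $r < p < q$ in this case, the bounds $\lnorm f\rnorm_{p,q}\le CA$ and $\lnorm f\rnorm_{p,r}\le CB$ are genuinely stronger than weak type. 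Writing $\lnorm f\rnorm_{p,u} = \lnorm \tau^{1/p}f^*(\tau)\rnorm_{L^u(\RR^+,\,\sfrac{\wrt\tau}{\tau})}$ and applying the ordinary H\"older/log-convexity inequality in the measure space $(\RR^+,\sfrac{\wrt\tau}{\tau})$ gives
\[
\lnorm f\rnorm_p = \lnorm f\rnorm_{p,p} \le C \lnorm f\rnorm_{p,r}^{1-\lambda}\lnorm f\rnorm_{p,q}^\lambda, \qquad \tfrac1p = \tfrac{1-\lambda}r + \tfrac\lambda q,
\]
and a direct computation from $\tfrac1q = \tfrac1p - \tfrac\alpha Q$ and $\tfrac1r = \tfrac1p + \tfrac\beta Q$ shows that the forced value is precisely $\lambda = \sfrac\beta{(\alpha+\beta)} = \theta_0$.

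This settles $\theta = \theta_0$. For $0 \le \theta < \theta_0$ I would then simply take a geometric mean of this estimate with the Sobolev bound $\lnorm f\rnorm_p \le CB$ (the case $\theta = 0$): raising the two bounds to powers $\mu$ and $1-\mu$ and multiplying gives $\lnorm f\rnorm_p \le CA^{\mu\theta_0}B^{1-\mu\theta_0}$, so that $\theta = \mu\theta_0$ sweeps out all of $[0,\theta_0]$. The main obstacle is thus entirely concentrated in the diagonal $s=t$, and is overcome not by strengthening either endpoint but by tracking the second Lorentz indices $q$ and $r$ that the two building blocks automatically produce; it is exactly their interpolation that makes $\theta_0$ appear and explains why $\theta$ cannot be pushed past it. The remaining points---transferring H\"older's inequality, O'Neil's inequality, and the rearrangement formula for $\lnorm \cdot\rnorm_{p,u}$ to the homogeneous space $G$---are routine, since these are statements about the underlying measure space alone.
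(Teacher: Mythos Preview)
Your proof is correct and follows essentially the same route as the paper: derive the Lorentz bounds $\lnorm f\rnorm_{s,q}\le C A$ and $\lnorm f\rnorm_{t,r}\le C B$ from the weak-type behaviour of $\absdot^{-\alpha}$ and $I_\beta$, and then interpolate. The only cosmetic difference is in the diagonal case $s=t$: the paper treats all $\theta\le\theta_0$ in one stroke by observing that $u\le p$ whenever $\tfrac1u=\tfrac\theta q+\tfrac{1-\theta}r$ and $\theta\le\theta_0$, so that $\lnorm f\rnorm_p\le C\lnorm f\rnorm_{p,u}\le C\lnorm f\rnorm_{p,q}^{\theta}\lnorm f\rnorm_{p,r}^{1-\theta}$ directly, whereas you first isolate $\theta=\theta_0$ and then take a geometric mean with the Sobolev endpoint.
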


\begin{proof}
We use Lorentz spaces and real interpolation \cite{BL}.
Recall that the nondecreasing rearrangement $f^*: \RR^+ \to [0, \infty)$ of a function $f$ on $G$ satisfies
\[
\labs \{ t \in \RR^+ :  f^*(t)  > \lambda \} \rabs = \labs \{ x \in G : \labs f(x) \rabs > \lambda \} \rabs
\]
for all $\lambda \in \RR^+$; the measures are Lebesgue measure on $\RR^+$ and the Haar measure on $G$.
The Lorentz space $L^{p,q}(G)$ is the set of all functions $f$ on $G$ such that $\lnorm f\rnorm_{p,q}$ is finite, where
\[
\lnorm f\rnorm_{p,q} = \lpar \int_0^\infty \labs t^{1/p} f^*(t) \rabs^q   \,\frac{dt}{t} \rpar ^{1/q},
\]
with the obvious modification if $q = \infty$.
The space $L^p(G)$ coincides with $L^{p,p}(G)$.
In general, $\lnorm\cdot\rnorm_{p,q}$ is a quasi-norm, not a norm.
Mostly there is an equivalent norm, though in some cases, $L^{p,q}(G)$ is not normable.

First, we observe that $ f = \absdot^{-\alpha} \absdot^\alpha f$, and study the operator $M$ of pointwise multiplication  by $\absdot^{-\alpha}$.
This function lies in $L^{ Q/\alpha ,\infty}(G)$, and so $M$ takes $L^{(Q/\alpha)', \infty}(G)$ to $L^{1,\infty}(G)$ and $L^\infty(G)$ to  $L^{ Q/\alpha ,\infty}(G)$.
By the real interpolation theorem, if $(Q/\alpha)' < q < \infty$, then $M$ also maps $L^{q,q}(G)$ to $L^{s,q}(G)$,  and
\[
\lnorm \absdot^{-\alpha} g \rnorm_{s,q} \leq C_{\alpha,q} \lnorm g \rnorm_{q,q} \ .
\]

It follows that 
\[
\lnorm f \rnorm_{s,q} \leq C_{\alpha,q} \lnorm \absdot^{\alpha} f \rnorm_{q} \ .
\]

Second, we observe that $f = L^{-\beta/2} L^{\beta/2} f$, and consider the convolution operator $L^{-\beta/2}$.
The kernel of this operator, $I_\beta$, lies in $L^{ Q/(Q-\beta), \infty}(G)$, and so the operator $L^{-\beta/2}$ takes $L^{1}(G)$ to  $L^{ Q/(Q-\beta), \infty}(G)$ and $L^{(Q/(Q-\beta))', 1}(G)$ to $L^{\infty}(G)$.
By the real interpolation theorem, if $1 < r < (Q/(Q-\beta))'$, then $L^{-\beta/2}$ also maps $L^{r,r}(G)$ to $L^{t,r}(G)$, and
\[
\lnorm  L^{-\beta/2} g \rnorm_{t,r} \leq C_{\beta,r} \lnorm g \rnorm_{r,r} \ .
\]

Returning to our problem, it follows that 
\[
\lnorm f \rnorm_{t,r} \leq C_{\beta,r} \lnorm L^{\beta/2} f \rnorm_{r} \ .
\]

To conclude, there are two cases to consider.
If $s \neq t$, then $p$ lies between these indices, and by hypothesis, $1/p = \theta/s + (1-\theta)/t$.
By the real interpolation theorem,
\[
\lnorm f \rnorm_p
\leq C \lnorm f \rnorm_{s,q}^\theta \lnorm f \rnorm_{t,r}^{1-\theta}
\leq C \lnorm \absdot^{\alpha} f \rnorm_{q} ^\theta \lnorm L^{\beta/2} f \rnorm_{r}^{1-\theta} \ .
\]

If $s = t$, then $p$ lies between the indices $q$ and $r$, and $p \geq u$, where $1/u = \theta/q + (1-\theta)/r$.
By complex interpolation between the $L^{p, q}$ and $L^{p, r}$ quasinorms,
\[
\lnorm f \rnorm_p
\leq \lnorm f \rnorm_{p,u}
\leq C \lnorm f \rnorm_{p,q}^\theta \lnorm f \rnorm_{p,r}^{1-\theta}
\leq C \lnorm \absdot^{\alpha} f \rnorm_{q} ^\theta \lnorm L^{\beta/2} f \rnorm_{r}^{1-\theta} \ ,
\]
and we are done.
\end{proof}

\begin{remark}
We have focussed on \emph{a priori} inequalities here.
The reader who wishes to prove more general inequalities will be able to do this by using the techniques of \cite{CowPrice}, coupled with the generalization to the stratified group setting in \cite{Ricci} of the Landau--Pollak--Slepian theorem (see \cite{LanPol, PolSle, Slep}) used in \cite{CowPrice}.
\end{remark}

\end{document}